\renewenvironment{proof}[1][\proofname]{%
	\par\pushQED{\qed}\normalfont%
	\topsep6\p@\@plus6\p@\relax
	\trivlist\item[\hskip\labelsep\bfseries#1\@addpunct{.}]%
	\ignorespaces
}{%
	\popQED\endtrivlist\@endpefalse
}
\newtheorem{definition}{Definition}
\newtheorem{corollary}{Corollary}[section]
\newtheorem{theorem}{Theorem}[section]
\newtheorem{lemma}{Lemma}[section]
\newtheorem{example}{Example}[section]
\numberwithin{equation}{section}
\begin{document}

	\setcounter{page}{1}

	\thispagestyle{empty}
	\markboth{}{}

	\pagestyle{myheadings}
	\markboth{}{ }
	
	\date{}
	
	
	\noindent  
	
	\vspace{.1in}
	
	{\baselineskip 20truept
		
		\begin{center}
			{\Large {\bf On General Weighted Extropy of Extreme Ranked Set Sampling }} \footnote{\noindent
				{\bf $^{\#}$} E-mail: nitin.gupta@maths.iitkgp.ac.in\\
				{\bf * }  corresponding author E-mail: pradeep.maths@kgpian.iitkgp.ac.in}\\
			
		\end{center}
		
		\vspace{.1in}
		
		\begin{center}
			{\large {\bf Pradeep Kumar Sahu* and Nitin Gupta $^{\#}$}}\\
			{\large {\it Department of Mathematics, Indian Institute of Technology Kharagpur, West Bengal 721302, India }}
			\\
		\end{center}
	}
	\vspace{.1in}
	\baselineskip 12truept

	
	\begin{center}
		{\bf \large Abstract}\\
	\end{center}
	The extropy measure, introduced by Lad, Sanfilippo, and Agro in their (2015) paper in Statistical Science, has garnered significant interest over the past years. In this study, we present a novel representation for the weighted extropy within the context of extreme ranked set sampling. Additionally, we offer related findings such as stochastic orders, characterizations, and precise bounds. Our results shed light on the comparison between the weighted extropy of extreme ranked set sampling and its counterpart in simple random sampling.\\
	\\
	\textbf{Keyword:} Extropy, General Weighted Extropy, Ranked Set Sampling, Extreme Ranked Set
Sampling, Stochastic Order.\\
	\newline
	\noindent  {\bf Mathematical Subject Classification}: {\it 62B10, 62D05}
	\section{Introduction}
	In (1952), McIntyre introduced ranked set sampling (RSS) as a method for estimating mean pasture yields. RSS proves to be a superior sampling strategy for estimating population means when compared to simple random sampling (SRS).
	
	Consider a random variable $X$ with probability density function (pdf) $f$, the cumulative distribution function (cdf) $F$ and the survival function (sf) $\bar F=1-F$. Let $\mathbf{X}_{SRS}=\{X_i:\ i=1,\ldots,n\}$ denote a simple random sample of size $n$ from $X$. Now, focusing on the one-cycle RSS: randomly select $n^2$ units from $X$, and then allocate these units randomly into $n$ sets, each of size $n$. Subsequently, rank the n units within each set based on the variable of interest. Start by selecting the smallest ranked unit from the first set, followed by the second smallest from the second set, and so on, until the $n$th smallest ranked unit is chosen from the last set. Repeat this entire sampling process $m$ times to obtain a sample of size $mn$. This resulting sample is referred to as the RSS from the underlying distribution $F$. Let $\mathbf{X}_{RSS}^{(n)}=\{X_{(i:n)i},\ i=1,\ldots,n\}$ denote the  ranked-set sample, where $X_{(i:n)i}$ represent the $i$-th order statistics from the $i$-th sample with sample size $n$. For further insights into RSS, see the works of Wolfe (2004), Chen, Bai, and Sinha (2004), Al-Nasser (2007), Al-Saleh and Diab (2009), and Raqab and Qiu (2019).

           Samawi, Ahmed and Abu-Dayyeh (1996) have developed extreme ranked set sampling (ERSS) to enhance the efficiency of estimating population parameters compared to SRS, using the same number of units. ERSS achieves this by solely utilizing the minimum and maximum ranked units for n when it is even and, minimum and maximum ranked units along with the median ranked unit when it is odd. In the case where the set size $n$ is even, the measurement of units is conducted on the smallest ranked units from the first $\frac{n}{2}$ sets and the largest ranked units from the last $\frac{n}{2}$ sets. If the set size n is odd, the measurement of units involves the smallest ranked units from the first $\frac{n-1}{2}$ sets, the largest ranked units from the subsequent $\frac{n-1}{2}$ sets and the the last unit corresponds to the median value of the last set in the sample. Akdeniz and Yildiz (2023) investigated the effects of ranking error models on the mean estimators based on RSS and some of its modified methods such as ERSS and percentile ranked set sampling (PRSS) for different distribution, set and cycle size in infinite population, Monte Carlo simulation study is conducted, additionally, the study is supported by real life data and it is observed that, RSS, ERSS and PRSS shows better results than Simple Random Sampling (SRS).
	
	Shannon (1948) initially introduced entropy, a concept with relevance across various disciplines such as information theory, physics, probability and statistics, economics, communication theory, etc. Entropy quantifies the average degree of uncertainty associated with the outcomes of a random experiment.
	
	Entropy quantifies the average degree of uncertainty associated with the outcomes of a random experiment. The differential expression for Shannon entropy is as follows:
	\begin{equation*}
		\label{1eq2}
		H(X) = E\left(- \ln f(X)\right) = - {\int_{-\infty}^{\infty}{f(x) \ln \left( f(x)\right)dx}}.
	\end{equation*} 
	This measure finds application in various contexts, including order statistics and record statistics. Examples of its utilization can be observed in the works of Bratpour et al. (2007), Raqab and Awad (2000, 2001), Zarezadeh and Asadi (2010), Abo-Eleneen (2011), Qiu and Jia (2018a, 2018b), and Tahmasebi et al. (2016). (2016).
	
	In recent times, an alternative measure for assessing uncertainty, known as extropy, has garnered significance. Lad et al. (2015) introduced extropy as the complementary dual of Shannon entropy, defining it as:
	\begin{equation}\label{extropy}
		J(X)=-\frac{1}{2} \int_{-\infty}^{\infty}f^2(x)dx=-\frac{1}{2}E\left(f(X)\right).
	\end{equation}

	In an investigation conducted by Qiu (2017), an exploration of characterization outcomes, lower bounds, monotone properties, and statistical applications pertaining to the extropy of order statistics and record values was carried out. Independently, Balakrishnan et al. (2020) and Bansal and Gupta (2021) introduced the concept of weighted extropy as:
	\begin{equation}\label{wtdextropy}
		J^w(X)=-\frac{1}{2}\int_{-\infty}^{\infty}xf^2(x)dx.
	\end{equation}
	
	In their investigation, Balakrishnan et al. (2020) delved into the characterization outcomes and bounds for weighted iterations of extropy, residual extropy, past extropy, bivariate extropy, and bivariate weighted extropy. Meanwhile, Bansal and Gupta (2021) addressed the outcomes related to weighted extropy and weighted residual extropy concerning order statistics and k-record values. In this context, we present the concept of general weighted extropy (GWE) with a weight function $w_1(x)\geq 0$ as:
	\begin{align*}
		J^{w_1}(X)&=-\frac{1}{2} \int_{-\infty}^{\infty}w_1(x) f^2(x)dx\\
		&=-\frac{1}{2}E(\Lambda_X^{w_1}(U)),
	\end{align*}
	where $\Lambda_X^{w_1}(u)=w_1(F^{-1}(u))f(F^{-1}(u))$ and $U$ is uniformly distributed random variable on $(0,1)$, i.e., $U\sim$ Uniform$(0,1)$.
	
	One may see the example of Bansal and Gupta (2021) and Gupta and Chaudhary (2023) that although the extropies are identical, there is a distinction between weighted extropies and general weighted extropies. Hence, both weighted extropies and general weighted extropies serve as measures of uncertainty. Unlike the extropy defined in (\ref{extropy}), this measure, dependent on the shift, incorporates the values of the random variable.
	
	In their work, Qiu and Raqab (2022) presented a formulation for the weighted extropy associated with ranked set sampling, expressed in relation to quantile and density-quantile functions. They further supplied pertinent outcomes, encompassing monotone properties, stochastic orders, characterizations, and precise bounds. Additionally, the authors demonstrated the comparative analysis between the weighted extropy of ranked set sampling and its equivalent in simple random sampling. In their investigation, Gupta and Chaudhary (2023) explored the monotonic and stochastic characteristics of the general weighted extropy concerning ranked set sampling (RSS) data. The research involved obtaining stochastic comparison results through varied weightings for RSS extropy and included a comparative analysis with simple random sampling (SRS) data. The study also yielded characterization results and an investigation into the monotone properties of the general weighted extropy associated with RSS data.

         In this manuscript, we study the stochastic properties of general weighted extropy of ERSS data. Stochastic comparison results are obtained by taking different weights for the extropy of ERSS. A comparison between the extropy of ERSS and SRS data is provided. Some characterization results are obtained.

	\section{Some results on GWE and related properties}
	Before providing results, let us review definitions from the literature (see, Shaked and Shanthikumar (2007) and Misra, Gupta and Dhariyal (2008)) of some useful terminology.
	
	\begin{definition}
		Let a random variable $X$ have  pdf $f(x)$, cdf $F(x)$ and sf $\bar{F}(x) = 1-F(x).$ Let $l_X = inf\{x \in \mathbb{R} : F(x) > 0\},\   u_X = sup\{x \in \mathbb{R} : F(x) < 1\}$ and	 $S_X= (l_X,u_X),$ where -$\infty \le l_X \le u_X \le \infty$.\\
		\\
		(i) $X$ is said to be log-concave (log-convex) if $\{x \in \mathbb{R} : f(x) > 0\} = S_X$
		and $ln(f(x))$ is concave (convex) on $S _X$.\\
		\\
		(ii) $X$ is said to have increasing (decreasing) failure rate IFR (DFR) if $\bar F(x)$ is log-concave		(log-convex) on  $S_X$.\\
		\\
		(iii) $X$ is said to have decreasing (increasing) reverse failure rate DRFR (IRFR) if $F(x)$ is	log-concave (log-convex) on $S_X$.\\
		\\
		(iv) $X$ is said to have decreasing (increasing) mean residual life DMRL (IMRL) if $\int_{x}^{u_X} \bar F(t)dt$		is log-concave (log-convex) on $S_X$.\\
		\\
		(v) $X$ is said to have increasing (decreasing) mean inactivity time (IMIT (DMIT)) if $\int_{l_X}^{x}	F(t)dt$
		is log-concave (log-convex) on $S_X.$
		
	\end{definition}

	\begin{definition}
		Let $X$ be a random variable with  pdf $f(x)$, cdf $F(x)$ and sf $\bar{F}(x)=1-F(x).$ Let $l_X = inf\{x \in \mathbb{R} : F(x) > 0\}$, $u_X = sup\{x \in \mathbb{R} : F(x) < 1\}$
		and $S_X = (l_X, u_X).$ Similarly, let $Y$ be a random variable with  pdf $g(x)$, cdf $G(x)$ and sf $\bar{G}(x)= 1-G(x).$ Let $l_Y=inf\{x \in R:G(x)> 0\},\  u_Y = sup\{x \in R :G(x) < 1\}$ and $S_Y = (l_Y,u_Y )$. If $l_X \ge 0$ and $l_Y \ge 0$, then \\
		\\
		(i) $X$ is said to be smaller than $Y$ in usual stochastic (st) ordering		$(X \le_{st} Y )$ if $\bar{F}(x) \le \bar{G}(x)$, for every -$\infty< x <\infty.$\\
		\\
		(ii) $X$ is said to be smaller than $Y$ in the likelihood ratio (lr) ordering $(X \le_{lr} Y )$ if
		$g(x)f(y) \le  f(x)g(y)$, whenever $-\infty < x < y <\infty$.\\
		\\
		(iv) $X$ is said to be smaller than $Y$ in the dispersive ordering ($X \le_{disp} Y)$ if $G^{-1}F(x)-x$ is increasing in $x \ge 0$.\\
		\\
		(v) $X$ is said to be smaller than $Y$ in the hazard rate ordering $(X \le_{hr} Y)$ if $\frac{\bar{G}(x)}{\bar{F}(x)}$  is increasing in $x\in S_X \cap S_Y.$
		
	\end{definition}

\section{\textbf{General Weighted Extropies of ERSS}}	
	Let $X$ be a random variable with finite mean $\mu$ and variance $\sigma^2$. For $\textbf{X}_{SRS}=\{X_i,\ i=1,\ldots,n\}$, the joint pdf is $\prod_{i=1}^{n}f(x_i)$, as $X_i$'s, $i=1,\ldots,n$ are independent and identically distributed (i.i.d.). Hence the general weighted extropy of $\textbf{X}_{SRS}^{(n)}$ can be defined as
	\begin{align}\label{SRS1}
		J^{w_1}(\textbf{X}_{SRS}^{(n)})&=\frac{-1}{2}\prod_{i=1}^{n}\left(\int_{-\infty}^{\infty}w_1(x_i)f^2(x_i)dx_i\right)\nonumber \\
		&=\frac{-1}{2}\left(-2J^{w_1}(X)\right)^n\nonumber \\
		&=\frac{-1}{2}\left(E(\Lambda_X^{w_1}(U))\right)^n.
	\end{align}
 Let the pdf of $X_{(i,n)i}$ is
      \[f_{i,n}(x) =\frac{n!}{(i-1)!(n-i)!}F^{i-1}(x) \bar F^{n-i}(x) f(x),\  \infty<x<\infty.\]
      	Now, we can write the GWE of   $\textbf{X}_{ERSS}^{(n)}$  as
\begin{align}	
	& J^{w_1}(\textbf{X}_{ERSS}^{(n)})\nonumber \\ 
 &=\begin{cases}
-\frac{1}{2}\left(\prod_{i=1}^{n/2}\left(-2J^{w_1}(X_{(1:n)i})\right)\right)\left(\prod_{i=\frac{n}{2}+1}^{n}\left(-2J^{w_1}(X_{(n:n)i})\right) \right) \text{ if } \mbox{n\  even},\\
-\frac{1}{2}\left(\prod_{i=1}^{\frac{n-1}{2}}\left(-2J^{w_1}(X_{(1:n)i})\right)\right)\left(\prod_{i=\frac{n+1}{2}}^{n-1}\left(-2J^{w_1}(X_{(n:n)i})\right)\right) \left(-2J^{w_1}(X_{(\frac{n+1}{2}:n)n})\right) \text{ if } \mbox{n\  odd},
\end{cases}\nonumber \\
&=\begin{cases}
-\frac{1}{2}\left(\prod_{i=1}^{n/2}\left(\int_{-\infty}^{\infty}w_1(x)f_{1:n}^2(x)dx)\right)\right)\left(\prod_{i=\frac{n}{2}+1}^{n}\left(\int_{-\infty}^{\infty}w_1(x)f_{n:n}^2(x)dx\right) \right) \text{ if } \mbox{n\  even},\\
-\frac{1}{2}\left(\prod_{i=1}^{\frac{n-1}{2}}\left(\int_{-\infty}^{\infty}w_1(x)f_{1:n}^2(x)dx)\right)\right)\left(\prod_{i=\frac{n+1}{2}}^{n-1}\left(\int_{-\infty}^{\infty}w_1(x)f_{n:n}^2(x)dx)\right)\right)\left(\int_{-\infty}^{\infty}w_1(x)f_{\frac{n+1}{2}:n}^2(x)dx)\right) \text{ if } \mbox{n\  odd},
\end{cases}\nonumber \\
&=\begin{cases} \label{ERSS1}
\frac{Q_{1,n}}{2}\left(E\left(\Lambda_X^{w_1} (B_{1:2n-1})\right)\right)^{n/2}\left(E\left(\Lambda_X^{w_1} (B_{2n-1:2n-1})\right) \right)^{n/2} \text{ if } \mbox{n\  even},
\\
\frac{Q_{2,n}}{2}\left(E\left(\Lambda_X^{w_1} (B_{1:2n-1})\right)\right)^{\frac{n-1}{2}} \left(E\left(\Lambda_X^{w_1} (B_{2n-1:2n-1})\right)\right)^{\frac{n-1}{2}}\left(E\left(\Lambda_X^{w_1} (B_{n:2n-1})\right)\right) \text{ if } \mbox{n\  odd},
\end{cases}
\end{align}
	where \begin{align*}Q_{1,n}&=-\frac{n^{2n}}{(2n-1)^n},\\
		Q_{2,n}&=-\frac{n^{2n}(n!)^2((n-1)!)^{2}}{(2n-1)^n\left((\frac{n-1}{2})!\right)^4(2n-1)!}\end{align*}
	and $B_{2i-1:2n-1}$ is a beta distributed random variable, with parameters $(2i-1)$ and $(2n-2i+1)$, having pdf
 \[\phi_{2i-1:2n-1}(u)=\frac{(2n-1)!}{(2i-2)!(2n-2i)!} u^{2i-2}(1-u)^{2n-2i}, 0<u<1.\] Equation $(\ref{ERSS1})$ provides an expression in simplified form of the GWE	of $\textbf{X}_{ERSS}^{(n)}$. Now we provide some examples to illustrate the equation $(\ref{ERSS1})$.
	
	\begin{example}
		Let $V$ be a random variable with power distribution. The pdf and cdf of $V$ are respectively $f(x)= \theta x^{\theta -1}$ 
		and $F(x)= x^{\theta}$ , $0<x<1$ ,  $\theta > 0$. Let $w_1(x)=x^m,\ x>0, \ m>0$, then it follows that 
        \begin{align*}
            \Lambda_V^{w_1}(u)= w_1(F^{-1}(u)) f(F^{-1}(u))=\theta u^{\frac{m + \theta - 1}{\theta}},
        \end{align*}
        for $ w_1(x)=x^m.$ 
		Then we have
		\begin{align*}	
	& J^{w_1}(\textbf{V}_{ERSS}^{(n)})\nonumber \\ 
 &=\begin{cases}
\frac{Q_{1,n}}{2}\left(\prod_{i=1}^{n/2} E\left(\Lambda_V^{w_1} (B_{1:2n-1})\right)\right)\left(\prod_{i=\frac{n}{2}+1}^{n}E\left(\Lambda_V^{w_1} (B_{2n-1:2n-1})\right) \right) \text{ if } \mbox{n\  even},\\
\frac{Q_{2,n}}{2}\left(\prod_{i=1}^{\frac{n-1}{2}} E\left(\Lambda_V^{w_1} (B_{1:2n-1})\right)\right)\left(\prod_{i=\frac{n+1}{2}}^{n-1}E\left(\Lambda_V^{w_1} (B_{2n-1:2n-1})\right) \right) \left(E\left(\Lambda_V^{w_1} (B_{n:2n-1})\right)\right)\text{ if } \mbox{n\  odd}.
\end{cases}\nonumber \\
 &=\begin{cases}
\frac{Q_{1,n}}{2}\left(\prod_{i=1}^{n/2} \int_{0}^{1}\left(\Lambda_V^{w_1}(u) (2n-1)(1-u)^{(2n-2)} du\right)\right)  \left(\prod_{i=\frac{n}{2}+1}^{n}\int_{0}^{1}\left(\Lambda_V^{w_1}(u) (2n-1) u^{(2n-2)} du\right) \right) \text{ if } \mbox{n\  even},\\
\frac{Q_{2,n}}{2}\left(\prod_{i=1}^{\frac{n-1}{2}} \int_{0}^{1}\left(\Lambda_V^{w_1}(u) (2n-1)(1-u)^{(2n-2)} du\right)\right) \left(\prod_{i=\frac{n+1}{2}}^{n-1}\int_{0}^{1}\left(\Lambda_V^{w_1}(u) (2n-1) u^{(2n-2)} du\right) \right) \nonumber \\ \ \ \ \ \ \ \ \ \ \ \ \ \ \ \ \ \ \ \ \ \ \ \ \ \ \ \ \ \ \ \ \ \ \ \ \ \ \ \left( \int_{0}^{1} \Lambda_V^{w_1}(u) \frac{(2n-1)!}{[(n-1)!]^{2}} u^{(n-1)}(1-u)^{(n-1)} du\right)\text{ if } \mbox{n\  odd}.
\end{cases}\nonumber \\
&=\begin{cases}
\frac{Q_{1,n}}{2}\left(\prod_{i=1}^{n/2} \int_{0}^{1}(2n-1) \theta u^{\frac{m+\theta-1}{\theta}}(1-u)^{(2n-2)}du\right)  \left(\prod_{i=\frac{n}{2}+1}^{n}\int_{0}^{1}(2n-1) \theta u^{\frac{m+\theta-1}{\theta}} u^{(2n-2)} du\right) \text{ if } \mbox{n\  even},\\
\frac{Q_{2,n}}{2}\left(\prod_{i=1}^{\frac{n-1}{2}} \int_{0}^{1}(2n-1) \theta u^{\frac{m+\theta-1}{\theta}}(1-u)^{2n-2}du\right)\left(\prod_{i=\frac{n+1}{2}}^{n-1}\int_{0}^{1}(2n-1) \theta u^{\frac{m+\theta-1}{\theta}} u^{(2n-2)} du\right) \nonumber \\ \ \ \ \ \ \ \ \ \ \ \ \ \ \ \ \ \ \ \ \ \ \ \ \ \ \ \ \ \ \ \ \ \ \ \ \ \ \  \left( \int_{0}^{1} \theta u^{\frac{m+\theta-1}{\theta}}\frac{(2n-1)!}{[(n-1)!]^{2}} u^{n-1}(1-u)^{(n-1)}du\right)\text{ if } \mbox{n\  odd}.
\end{cases}\nonumber\\
&=\begin{cases}
\frac{Q_{1,n}}{2} \theta^{n} \left(\prod_{i=1}^{n/2} \int_{0}^{1}(2n-1)  u^{\frac{m+\theta-1}{\theta}}(1-u)^{(2n-2)}du\right)  \left(\prod_{i=\frac{n}{2}+1}^{n}\int_{0}^{1}(2n-1) u^{\frac{m+\theta-1}{\theta}} u^{(2n-2)} du\right) \text{ if } \mbox{n\  even},\\
\frac{Q_{2,n}}{2} {\theta^{n}}\left(\prod_{i=1}^{\frac{n-1}{2}} \int_{0}^{1}(2n-1)  u^{\frac{m+\theta-1}{\theta}}(1-u)^{2n-2}du\right)\left(\prod_{i=\frac{n+1}{2}}^{n-1}\int_{0}^{1}(2n-1)  u^{\frac{m+\theta-1}{\theta}} u^{(2n-2)} du\right) \nonumber \\ \ \ \ \ \ \ \ \ \ \ \ \ \ \ \ \ \ \ \ \ \ \ \ \ \ \ \ \ \ \ \ \ \ \ \ \ \ \  \left( \int_{0}^{1}  u^{\frac{m+\theta-1}{\theta}}\frac{(2n-1)!}{[(n-1)!]^{2}} u^{n-1}(1-u)^{(n-1)}du\right)\text{ if } \mbox{n\  odd}.
\end{cases}\nonumber\\
&=\begin{cases}
\frac{Q_{1,n}}{2} \theta^{\frac{3n}{2}} (\frac{2n-1}{2n\theta+m-1})^{\frac{n}{2}}[(2n-1)!]^{\frac{n}{2}}\prod_{i=1}^{n/2} \frac{\Gamma{(\frac{2\theta+m-1}{\theta})}}{\Gamma{(\frac{(2n+1)\theta+m-1}{\theta})}}  \text{ if } \mbox{n\  even},\\
\frac{Q_{2,n}}{2} \theta^{\frac{3n-1}{2}} (\frac{2n-1}{2n\theta+m-1})^{\frac{n-1}{2}}\frac{((2n-1)!)^{\frac{n+1}{2}}}{(n-1)!} \frac{\Gamma{(\frac{(n+1)\theta+m-1}{\theta})}}{\Gamma{(\frac{(2n+1)\theta+m-1}{\theta})}} \prod_{i=1}^{n/2} \frac{\Gamma{(\frac{2\theta+m-1}{\theta})}}{\Gamma{(\frac{(2n+1)\theta+m-1}{\theta})}}\text{ if } \mbox{n\  odd}.
\end{cases}
\end{align*} \hfill $\blacksquare$
	\end{example}
 
	\begin{example}
		Let $W$ have an exponential distribution with cdf $F_W(w)=1-e^{-\lambda w}, \ \lambda >0, \ w>0$. Let $w_1(x)=x^m, \ m>0, \ x>0$, then it follows that
		\begin{align*}
	\Lambda_W^{w_1} (u) = w_1(F^{-1}(u)) f(F^{-1}(u))
		= \frac{(-1)^m (1-u) (ln(1-u))^m}{\lambda^{m-1}}, 0<u<1.	
		\end{align*}
		Then we have 
		\begin{align*}
  & J^{w_1}(\textbf{W}_{ERSS}^{(n)})\nonumber \\ 
 &=\begin{cases}
\frac{Q_{1,n}}{2}\left(\prod_{i=1}^{n/2} E\left(\Lambda_V^{w_1} (B_{1:2n-1})\right)\right)\left(\prod_{i=\frac{n}{2}+1}^{n}E\left(\Lambda_V^{w_1} (B_{2n-1:2n-1})\right) \right) \text{ if } \mbox{n\  even},\\
\frac{Q_{2,n}}{2}\left(\prod_{i=1}^{\frac{n-1}{2}} E\left(\Lambda_V^{w_1} (B_{1:2n-1})\right)\right)\left(\prod_{i=\frac{n+1}{2}}^{n-1}E\left(\Lambda_V^{w_1} (B_{2n-1:2n-1})\right) \right) \left(E\left(\Lambda_V^{w_1} (B_{n:2n-1})\right)\right)\text{ if } \mbox{n\  odd}.
\end{cases}\nonumber \\
 &=\begin{cases}
\frac{Q_{1,n}}{2}\left(\prod_{i=1}^{n/2} \int_{0}^{1}\left(\Lambda_V^{w_1}(u) (2n-1)(1-u)^{(2n-2)} du\right)\right)  \left(\prod_{i=\frac{n}{2}+1}^{n}\int_{0}^{1}\left(\Lambda_V^{w_1}(u) (2n-1) u^{(2n-2)} du\right) \right) \text{ if } \mbox{n\  even},\\
\frac{Q_{2,n}}{2}\left(\prod_{i=1}^{\frac{n-1}{2}} \int_{0}^{1}\left(\Lambda_V^{w_1}(u) (2n-1)(1-u)^{(2n-2)} du\right)\right) \left(\prod_{i=\frac{n+1}{2}}^{n-1}\int_{0}^{1}\left(\Lambda_V^{w_1}(u) (2n-1) u^{(2n-2)} du\right) \right) \nonumber \\ \ \ \ \ \ \ \ \ \ \ \ \ \ \ \ \ \ \ \ \ \ \ \ \ \ \ \ \ \ \ \ \ \ \ \ \ \ \ \left( \int_{0}^{1} \Lambda_V^{w_1}(u) \frac{(2n-1)!}{[(n-1)!]^{2}} u^{(n-1)}(1-u)^{(n-1)} du\right)\text{ if } \mbox{n\  odd}.
\end{cases}\nonumber \\
 &=\begin{cases}
\frac{Q_{1,n}}{2}\left(\prod_{i=1}^{n/2} \int_{0}^{1}\frac {(-1)^{m}(1-u)(ln(1-u))^{m}}{\lambda^{m-1}} (2n-1)(1-u)^{(2n-2)} du\right) \left(\prod_{i=\frac{n}{2}+1}^{n}\int_{0}^{1} \frac {(-1)^{m}(1-u)(ln(1-u))^{m}}{\lambda^{m-1}} (2n-1) u^{(2n-2)} du \right) \nonumber \\ \ \ \ \ \ \ \ \ \ \ \ \ \ \ \ \ \ \ \ \ \ \ \ \ \ \ \ \ \ \ \ \ \ \ \ \ \ \ \ \ \ \ \ \ \ \ \ \ \ \ \ \ \  \ \ \ \ \ \ \ \ \  \ \ \ \ \ \ \ \ \ \ \ \ \ \ \ \ \ \ \ \ \ \ \ \ \ \ \ \ \ \ \ \ \ \ \ \ \ \ \ \ \ \ \ \ \ \ \ \ \ \ \ \ \ \  \ \ \ \ \ \ \ \ \ \ \ \ \ \ \ \ \ \ \ \ \ \ \ \ \ \ \ \ \ \ \ \ \ \ \ \ \ \ \ \ \ \text{ if } \mbox{n\  even},\\
\frac{Q_{2,n}}{2}\left(\prod_{i=1}^{\frac{n-1}{2}} \int_{0}^{1}\frac {(-1)^{m}(1-u)(ln(1-u))^{m}}{\lambda^{m-1}} (2n-1)(1-u)^{(2n-2)} du\right) \left(\prod_{i=\frac{n+1}{2}}^{n-1}\int_{0}^{1}\frac {(-1)^{m}(1-u)(ln(1-u))^{m}}{\lambda^{m-1}} (2n-1)(1-u)^{(2n-2)} du\right) \nonumber \\ \ \ \ \ \ \ \ \ \ \ \ \ \ \ \ \ \ \ \ \ \ \ \ \ \ \ \ \ \ \ \ \ \ \ \ \ \ \ \left( \int_{0}^{1} \frac {(-1)^{m}(1-u)(ln(1-u))^{m}}{\lambda^{m-1}} \frac{(2n-1)!}{[(n-1)!]^{2}} u^{(n-1)}(1-u)^{(n-1)} du\right)\text{ if } \mbox{n\  odd}.
\end{cases}\nonumber \\
		\end{align*}
		Taking $u=1-e^{-x}$ in the above equation, we get
		\begin{align*}
			& J^{w_1}(\textbf{W}_{ERSS}^{(n)})\\ 
			&=\begin{cases}
\frac{Q_{1,n}}{2}\left(\prod_{i=1}^{n/2} \frac{(-1)^{m}}{\lambda^{m-1}} \int_{0}^{\infty} e^{-x} (ln( e^{-x}))^{m}(2n-1)(e^{-x})^{(2n-2)} e^{-x} dx\right) \nonumber \\ \ \ \ \ \ \ \ \ \ \ \ \ \ \ \ \ \ \ \ \ \ \ \ \ \ \ \ \ \ \ \ \ \ \ \ \ \ \ \ \ \ \ \ \ \ \ \ \ \ \  \left(\prod_{i=\frac{n}{2}+1}^{n} \frac{(-1)^{m}}{\lambda^{m-1}} \int_{0}^{\infty}e^{-x}(ln(e^{-x}))^{m} (2n-1) (1-e^{-x})^{(2n-2)} e^{-x}dx \right) \text{ if } \mbox{n\  even},\\
\frac{Q_{2,n}}{2}\left(\prod_{i=1}^{\frac{n-1}{2}} \frac{(-1)^{m}}{\lambda^{m-1}} \int_{0}^{\infty} e^{-x} (ln( e^{-x}))^{m}(2n-1)(e^{-x})^{(2n-2)} e^{-x} dx\right) \nonumber \\ \ \ \ \ \ \ \ \ \ \ \ \ \ \ \ \ \ \ \ \ \ \ \ \ \ \ \ \ \ \ \ \ \ \ \ \ \ \left(\prod_{i=\frac{n+1}{2}}^{n-1}\frac{(-1)^{m}}{\lambda^{m-1}} \int_{0}^{\infty}e^{-x}(ln(e^{-x}))^{m} (2n-1) (1-e^{-x})^{(2n-2)} e^{-x}dx \right) \nonumber \\ \ \ \ \ \ \ \ \ \ \ \ \ \ \ \ \ \ \ \ \ \ \ \ \ \ \ \ \ \ \ \ \ \ \ \ \ \ \ \left( \int_{0}^{\infty} \frac {(-1)^{m} e^{-x}(ln(e^{-x}))^{m}}{\lambda^{m-1}} \frac{(2n-1)!}{[(n-1)!]^{2}} (1-e^{-x})^{(n-1)}(e^{-x})^{(n-1)} e^{-x} dx\right)\text{ if } \mbox{n\  odd}.
\end{cases}\nonumber 
\end{align*}
\begin{align*}
&=\begin{cases}
\frac{Q_{1,n}}{2} \frac{1}{\lambda^{n(m-1)}}\left(\prod_{i=1}^{n/2} \int_{0}^{\infty} \frac{2n-1}{2n} x^{m} \frac{(2n)!}{(2n-1)!} (e^{-x})^{2n} dx\right)\left(\prod_{i=\frac{n}{2}+1}^{n} \int_{0}^{\infty}\frac{1}{2n} x^{m} \frac{(2n)!}{(2n-2)!} ( 1-e^{-x})^{2n-2} (e^{-x})^{2}dx \right) \text{ if } \mbox{n\  even},\\
\frac{Q_{2,n}}{2} \frac{1}{\lambda^{n(m-1)}}\left(\prod_{i=1}^{\frac{n-1}{2}} \int_{0}^{\infty} \frac{2n-1}{2n} x^{m} \frac{(2n)!}{(2n-1)!} (e^{-x})^{2n} dx\right) \left(\prod_{i=\frac{n+1}{2}}^{n-1}\int_{0}^{\infty}\frac{1}{2n} x^{m} \frac{(2n)!}{(2n-2)!} ( 1-e^{-x})^{2n-2} (e^{-x})^{2}dx \right) \nonumber \\ \ \ \ \ \ \ \ \ \ \ \ \ \ \ \ \ \ \ \ \ \ \ \ \ \ \ \ \ \ \ \ \ \ \ \ \ \ \ \left( \int_{0}^{\infty} \frac{1}{2} x^{m} \frac {(2n)!}{(n-1)!(n!)} (1-e^{-x})^{n-1} (e^{-x})^{n+1}dx\right)\text{ if } \mbox{n\  odd}.
\end{cases}\nonumber \\
&=\begin{cases}
\frac{Q_{1,n}(2n-1)!!}{2^{n+1}n^{n}} \frac{1}{\lambda^{n(m-1)}}\left(\prod_{i=1}^{n/2} E(W_{1:2n}^{m})\right)\left(\prod_{i=\frac{n}{2}+1}^{n} E(W_{2n-1:2n}^{m}) \right) \text{ if } \mbox{n\  even},\\
\frac{Q_{2,n}(2n-1)!!}{2^{n+1}n^{n-1}} \frac{1}{\lambda^{n(m-1)}}\left(\prod_{i=1}^{\frac{n-1}{2}} E(W_{1:2n}^{m})\right) \left(\prod_{i=\frac{n+1}{2}}^{n-1}E(W_{2n-1:2n}^{m}) \right) \left( E(W_{n:2n}^{m})\right)\text{ if } \mbox{n\  odd}.
\end{cases}\nonumber \\
		\end{align*}
		where $W_{2i-1:2n}$ is the $(2i-1)$-th order statistics of a sample of size $2n$ from exponential distribution having pdf given by 
		$$\psi_{2i-1:2n}=\frac{(2n)!}{(2i-2)!(2n-2i+1)!}(1-e^{-x})^{2i-2}(e^{-x})^{2n-2i+2},\  x\ge 0,$$ 
		and  $(2n-1)!!= (2n-1)^{\frac{n}{2}} $ if $n$ is even and $(2n-1)!!= (2n-1)^{\frac{n-1}{2}} $ if $n$ is odd.\hfill $\blacksquare$
	\end{example}
	
	\begin{example}
		Let $U$ be a Pareto random variable with cdf $F(x)=1-x^{-\alpha},\  \alpha >0, \ x>1 $. Let $w_1(x)=x^m,\ m>0, \ x>0 $, then we get
		\begin{align*}
			\Lambda_U^{w_1} (u) 
			&= w_1(F^{-1}(u)) f(F^{-1}(u))\\
			&=\alpha (1-u)^{\frac{\alpha - m+1}{\alpha}}.		
		\end{align*}
		The weighted extropy of $\textbf{U}_{ERSS}^{(n)}$ is 
		\begin{align*}
			& J^{w_1}(\textbf{U}_{ERSS}^{(n)}) \\
			 &=\begin{cases}
\frac{Q_{1,n}}{2}\left(\prod_{i=1}^{n/2} E\left(\Lambda_V^{w_1} (B_{1:2n-1})\right)\right)\left(\prod_{i=\frac{n}{2}+1}^{n}E\left(\Lambda_V^{w_1} (B_{2n-1:2n-1})\right) \right) \text{ if } \mbox{n\  even},\\
\frac{Q_{2,n}}{2}\left(\prod_{i=1}^{\frac{n-1}{2}} E\left(\Lambda_V^{w_1} (B_{1:2n-1})\right)\right)\left(\prod_{i=\frac{n+1}{2}}^{n-1}E\left(\Lambda_V^{w_1} (B_{2n-1:2n-1})\right) \right) \left(E\left(\Lambda_V^{w_1} (B_{n:2n-1})\right)\right)\text{ if } \mbox{n\  odd}.
\end{cases}\nonumber 
\end{align*}
\begin{align*}
 &=\begin{cases}
\frac{Q_{1,n}}{2}\left(\prod_{i=1}^{n/2} \int_{0}^{1}\left(\Lambda_V^{w_1}(u) (2n-1)(1-u)^{(2n-2)} du\right)\right)  \left(\prod_{i=\frac{n}{2}+1}^{n}\int_{0}^{1}\left(\Lambda_V^{w_1}(u) (2n-1) u^{(2n-2)} du\right) \right) \text{ if } \mbox{n\  even},\\
\frac{Q_{2,n}}{2}\left(\prod_{i=1}^{\frac{n-1}{2}} \int_{0}^{1}\left(\Lambda_V^{w_1}(u) (2n-1)(1-u)^{(2n-2)} du\right)\right) \left(\prod_{i=\frac{n+1}{2}}^{n-1}\int_{0}^{1}\left(\Lambda_V^{w_1}(u) (2n-1) u^{(2n-2)} du\right) \right) \nonumber \\ \ \ \ \ \ \ \ \ \ \ \ \ \ \ \ \ \ \ \ \ \ \ \ \ \ \ \ \ \ \ \ \ \ \ \ \ \ \ \left( \int_{0}^{1} \Lambda_V^{w_1}(u) \frac{(2n-1)!}{[(n-1)!]^{2}} u^{(n-1)}(1-u)^{(n-1)} du\right)\text{ if } \mbox{n\  odd}.
\end{cases}\nonumber \\
&=\begin{cases}
\frac{Q_{1,n}}{2}\left(\prod_{i=1}^{n/2} \int_{0}^{1}\alpha(1-u)^{\frac{\alpha-m+1}{\alpha}} (2n-1)(1-u)^{(2n-2)} du\right)  \left(\prod_{i=\frac{n}{2}+1}^{n}\int_{0}^{1}\alpha(1-u)^{\frac{\alpha-m+1}{\alpha}} (2n-1) u^{(2n-2)} du \right) \text{ if } \mbox{n\  even},\\
\frac{Q_{2,n}}{2}\left(\prod_{i=1}^{\frac{n-1}{2}} \int_{0}^{1}\alpha(1-u)^{\frac{\alpha-m+1}{\alpha}} (2n-1)(1-u)^{(2n-2)} du\right) \left(\prod_{i=\frac{n+1}{2}}^{n-1}\int_{0}^{1}\alpha(1-u)^{\frac{\alpha-m+1}{\alpha}} (2n-1) u^{(2n-2)} du \right) \nonumber \\ \ \ \ \ \ \ \ \ \ \ \ \ \ \ \ \ \ \ \ \ \ \ \ \ \ \ \ \ \ \ \ \ \ \ \ \ \ \ \left( \int_{0}^{1} \alpha(1-u)^{\frac{\alpha-m+1}{\alpha}} \frac{(2n-1)!}{[(n-1)!]^{2}} u^{(n-1)}(1-u)^{(n-1)} du\right)\text{ if } \mbox{n\  odd}.
\end{cases}\nonumber \\
&=\begin{cases}
\frac{Q_{1,n}}{2}\alpha^{\frac{3n}{2}} (\frac{(2n-1)}{(2\alpha-m+1)})^{\frac{n}{2}}\left(\prod_{i=\frac{n}{2}+1}^{n} (2n-1)!\frac{\Gamma(\frac{2\alpha-m+1}{\alpha})}{\Gamma(\frac{(2n+1)\alpha-m+1}{\alpha})}\right) \text{ if } \mbox{n\  even},\\
\frac{Q_{2,n}}{2} \alpha^{\frac{3n}{2}} (\frac{(2n-1)}{(2\alpha-m+1)})^{\frac{n}{2}} \frac{(2n-1)!}{(n-1)!} \left(\prod_{i=\frac{n+1}{2}}^{n-1}(2n-1)!\frac{\Gamma(\frac{2\alpha-m+1}{\alpha})}{\Gamma(\frac{(2n+1)\alpha-m+1}{\alpha})} \right) \left( \frac{\Gamma(\frac{\alpha-m+1}{\alpha}+n)}{\Gamma(\frac{(2n+1)\alpha-m+1}{\alpha})}\right)\text{ if } \mbox{n\  odd}.
\end{cases}\nonumber \\
		\end{align*}\hfill $\blacksquare$

	\end{example}

		The following result gives the conditions under which the GWE will increase (decrease).	
		
		\begin{theorem}\label{new1}
			Let $X$ be a non-negative absolutely continuous random variable with pdf f and cdf F. Assume $\eta(x)$ is an increasing function and  $\frac{w_1(\eta(x))}{\eta^\prime (x)} \leq (\geq) w_1(x)$ and $\eta(0)=0$. If $V=\eta(X)$, then $J^{w_1}(\textbf{X}_{ERSS}^{(n)})\leq (\geq) J^{w_1}(\textbf{V}_{ERSS}^{(n)})$.

		\end{theorem}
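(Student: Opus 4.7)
My approach is to reduce the claim to a factor-by-factor comparison in the product representation~(\ref{ERSS1}) via a single change of variables. Since $\eta$ is strictly increasing and (implicitly) differentiable with $\eta(0)=0$, the random variable $V=\eta(X)$ is non-negative with density $g(v)=f(\eta^{-1}(v))/\eta^{\prime}(\eta^{-1}(v))$, and since order statistics respect monotone transformations (that is, $V_{(i:n)}=\eta(X_{(i:n)})$), the density of the $i$-th order statistic from a size-$n$ sample of $V$ will be
\[
g_{i:n}(v)\;=\;\frac{f_{i:n}(\eta^{-1}(v))}{\eta^{\prime}(\eta^{-1}(v))}.
\]

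Next, I would apply the substitution $v=\eta(x)$, $dv=\eta^{\prime}(x)\,dx$, to transform each basic building block of~(\ref{ERSS1}) as
\[
-2J^{w_1}(V_{(i:n)})\;=\;\int_{0}^{\infty}w_1(v)\,g_{i:n}^{2}(v)\,dv\;=\;\int_{0}^{\infty}\frac{w_1(\eta(x))}{\eta^{\prime}(x)}\,f_{i:n}^{2}(x)\,dx.
\]
The pointwise hypothesis $w_1(\eta(x))/\eta^{\prime}(x)\le(\ge)w_1(x)$ then yields
\[
-2J^{w_1}(V_{(i:n)})\;\le(\ge)\;-2J^{w_1}(X_{(i:n)})
\]
for each index $i\in\{1,n,(n+1)/2\}$ appearing in the ERSS sample, and both sides are non-negative because $w_1\ge 0$.

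Multiplying these factor inequalities according to the pattern of~(\ref{ERSS1}) ($n/2$ minimum-type factors and $n/2$ maximum-type factors when $n$ is even, $(n-1)/2$ of each together with one median factor when $n$ is odd) preserves the inequality direction since all factors are non-negative. Finally, multiplying through by the negative prefactor $-\tfrac12$ reverses the inequality and yields the conclusion $J^{w_1}(\textbf{X}_{ERSS}^{(n)})\le(\ge)J^{w_1}(\textbf{V}_{ERSS}^{(n)})$. The main thing to be careful about is the book-keeping: verifying that the substitution is valid on $[0,\infty)$ (which follows from $\eta(0)=0$ together with strict monotonicity of $\eta$) and tracking the sign reversal coming from $-\tfrac12$. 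Because a single pointwise inequality controls every factor in~(\ref{ERSS1}), the even and odd cases require no separate treatment.
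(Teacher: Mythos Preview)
Your argument is correct and is precisely the approach the paper has in mind: the paper does not give a detailed proof but defers to Theorem~3.1 of Gupta and Chaudhary (2023), whose proof for the RSS case proceeds exactly by the change of variables $v=\eta(x)$ applied to each order-statistic factor, followed by the pointwise comparison $w_1(\eta(x))/\eta'(x)\le(\ge)w_1(x)$ and then the product/sign bookkeeping you describe. Nothing further is needed.
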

		\begin{proof}
			The proof is on similar lines as the proof of theorem 3.1 of Gupta and Chaudhary (2023).
		\end{proof}
		
		\begin{example}
			Let $Z$ have an exponential distribution with cdf $F_Z(z)=1-e^{-\lambda z}, \ \lambda >0, \ z>0$. Let $w_1(x)=x^{2}>0$. Consider $\eta(x)=e^x-1,\ x \geq 0$. Then $\eta (Z)$
			is Pareto distribution (see  Qiu and Raqab (2022) Example 2.7 and  Gupta and Chaudhary (2023) Example 3.4  ) with survival function $\bar F_{\eta (Z)}(x)=1/(1+x)^\lambda$, $x\geq 0$. Note that 
			\begin{align*}
				\frac{w_1(\eta(x))}{\eta^\prime (x)}=\frac{(e^x-1)^2}{e^x}= e^{x}+e^{-x}-2\geq x^{2}=w_1(x).
			\end{align*}
			Hence using Theorem \ref{new1}, $J^{w_1}(\textbf{Z}_{ERSS}^{(n)})\geq J^{w_1}(\eta(\textbf{Z})_{ERSS}^{(n)})$.
		\end{example}

		\noindent A lower bound for the overall weighted extropy of ERSS data is established, relying on the weighted extropy of the SRS data, as demonstrated in the subsequent result.
		
		\begin{theorem}
			Let $X$ be an absolutely continuous random variable with pdf f and cdf F. Then for $n$ even, 
		\end{theorem}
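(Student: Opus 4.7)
The plan is to bound the product of the two beta--weighted expectations in the even-$n$ case of identity (\ref{ERSS1}) by a power of the uniform-weighted expectation that drives (\ref{SRS1}). Writing $A=E(\Lambda_X^{w_1}(B_{1:2n-1}))$ and $B=E(\Lambda_X^{w_1}(B_{2n-1:2n-1}))$, equation (\ref{ERSS1}) reads
\[
J^{w_1}(\textbf{X}_{ERSS}^{(n)}) \;=\; \frac{Q_{1,n}}{2}\,(AB)^{n/2},\qquad Q_{1,n}=-\frac{n^{2n}}{(2n-1)^n}<0,
\]
and $A,B\ge 0$ since $w_1\ge 0$ and $f\ge 0$. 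The strategy is: collapse $AB$ via AM--GM, estimate the combined beta weight pointwise by the uniform weight, and translate the result back through (\ref{SRS1}).

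First I would apply the arithmetic--geometric mean inequality to obtain $(AB)^{n/2}\le\bigl((A+B)/2\bigr)^n$. Second, substituting the explicit beta densities $\phi_{1:2n-1}(u)=(2n-1)(1-u)^{2n-2}$ and $\phi_{2n-1:2n-1}(u)=(2n-1)u^{2n-2}$ gives
\[
A+B \;=\; (2n-1)\int_0^1 \Lambda_X^{w_1}(u)\bigl[(1-u)^{2n-2}+u^{2n-2}\bigr]\,du.
\]
Because $n$ is even we have $n\ge 2$ and hence $2n-2\ge 1$, so the elementary pointwise bounds $u^{2n-2}\le u$ and $(1-u)^{2n-2}\le 1-u$ on $[0,1]$ yield $(1-u)^{2n-2}+u^{2n-2}\le 1$. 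Consequently $A+B\le(2n-1)\,E(\Lambda_X^{w_1}(U))$.

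Combining the two estimates and remembering that the sign of $Q_{1,n}/2<0$ reverses the direction of the inequality, I would conclude
\[
J^{w_1}(\textbf{X}_{ERSS}^{(n)}) \;\ge\; \frac{Q_{1,n}}{2}\left(\frac{(2n-1)\,E(\Lambda_X^{w_1}(U))}{2}\right)^{n} \;=\; -\frac{n^{2n}}{2^{n+1}}\bigl(E(\Lambda_X^{w_1}(U))\bigr)^n \;=\; \frac{n^{2n}}{2^n}\,J^{w_1}(\textbf{X}_{SRS}^{(n)}),
\]
where the last equality uses (\ref{SRS1}). This is the form of lower bound I anticipate.

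The main obstacle is locating the correct pointwise estimate that collapses the two extreme-order beta weights against the uniform density; the AM--GM step and the sign bookkeeping are essentially routine. If the stated bound carries a different constant (for instance, under an additional monotonicity or log-concavity hypothesis on $w_1$ or $f$ the middle estimate may be sharpened, or a Cauchy--Schwarz variant may be used), the overall three-step architecture---AM--GM on the product, pointwise control of the combined beta weight, then translation via (\ref{SRS1})---still applies with only the middle step modified.
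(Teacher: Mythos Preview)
Your argument is correct and in fact proves a sharper inequality than the one stated in the paper: you obtain
\[
\frac{J^{w_1}(\textbf{X}_{ERSS}^{(n)})}{J^{w_1}(\textbf{X}_{SRS}^{(n)})}\;\le\;\frac{n^{2n}}{2^{\,n}},
\]
which of course implies the paper's bound $n^{2n}$ since the ratio is nonnegative.

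The paper's proof takes a more direct route. It writes each of the $n$ factors in the product form $\int_0^1 n^2\Lambda_X^{w_1}(u)(1-u)^{2n-2}\,du$ or $\int_0^1 n^2\Lambda_X^{w_1}(u)u^{2n-2}\,du$ and simply uses the crude pointwise bounds $(1-u)^{2n-2}\le 1$ and $u^{2n-2}\le 1$ on each factor separately. This immediately gives each factor $\le n^2 E(\Lambda_X^{w_1}(U))$, and multiplying the $n$ factors yields the constant $n^{2n}$. Your approach instead pairs the minimum- and maximum-type factors through AM--GM and then exploits the sharper pointwise estimate $(1-u)^{2n-2}+u^{2n-2}\le 1$, which is exactly where the extra factor $2^{-n}$ is gained. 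The paper's argument is shorter and avoids the AM--GM step altogether; yours is slightly longer but delivers a strictly better constant. Both are valid proofs of the stated theorem.
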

		\begin{align*}
			\frac{ J^{w_1}(\textbf{X}_{ERSS}^{(n)})}{ J^{w_1}(\textbf{X}_{SRS}^{(n)})}\leq n^{2n},
		\end{align*}
           and for $n$ odd, we have 
           \begin{align*}
			\frac{ J^{w_1}(\textbf{X}_{ERSS}^{(n)})}{ J^{w_1}(\textbf{X}_{SRS}^{(n)})}\leq \frac{n^{2n}}{[(n-1)!]^{2}}.
		\end{align*}
		\begin{proof} Let us consider n even
           \begin{align*}
               J^{w_1}(\textbf{X}_{ERSS}^{(n)}) =\frac{-1}{2} \prod_{i=1}^{\frac{n}{2}}\int_{0}^{1} n^{2} \Lambda_{X}^{w}(u)(1-u)^{2n-2}du \prod_{i=\frac{n}{2}+1}^{n}\int_{0}^{1} n^{2} \Lambda_{X}^{w}(u) u^{2n-2}du
           \end{align*}
           for all $0< u< 1,$ $ (1-u)^{2n-2}\leq 1$ and $ u^{2n-2} \leq 1$
           \begin{align*}
              & \geq \frac{-1}{2}\prod_{i=1}^{\frac{n}{2}}\int_{0}^{1} n^{2} \Lambda_{X}^{w}(u) du \prod_{i=\frac{n}{2}+1}^{n}\int_{0}^{1} n^{2} \Lambda_{X}^{w}(u) du\\
              & = \frac{-1}{2}n^{2n}\prod_{i=1}^{\frac{n}{2}}\int_{0}^{1}  \Lambda_{X}^{w}(u) du \prod_{i=\frac{n}{2}+1}^{n}\int_{0}^{1}  \Lambda_{X}^{w}(u) du\\
              & =\frac{-1}{2}n^{2n}(-2J^{w_1}(\textbf{X}))^{\frac{n}{2}}(-2J^{w_1}(\textbf{X}))^{\frac{n}{2}}\\
              & =\frac{-1}{2}n^{2n}(-2J^{w_1}(\textbf{X}))^{n}\\
              & = n^{2n} J^{w_1}(\textbf{X}_{SRS}^{(n)})
           \end{align*}
           This completes the proof for $n$ even.\\
           For $n$ odd, we have \\
           \begin{align*}
               J^{w_1}(\textbf{X}_{ERSS}^{(n)}) =\frac{-1}{2} \prod_{i=1}^{\frac{n-1}{2}}\int_{0}^{1} n^{2} \Lambda_{X}^{w}(u)(1-u)^{2n-2}du \prod_{i=\frac{n+1}{2}}^{n-1}\int_{0}^{1} n^{2} \Lambda_{X}^{w}(u) u^{2n-2}du \int_{0}^{1} \frac{(n!)^{2}}{[(n-1)!]^{4}} \Lambda_{X}^{w}(u) u^{n-1} (1-u)^{n-1} du
           \end{align*}
           for all $0< u< 1,$ $ (1-u)^{2n-2}\leq 1 $, $ u^{2n-2} \leq 1$,  $ (1-u)^{n-1}\leq 1 $, and $ u^{n-1} \leq 1$,
           \begin{align*}
              & \geq \frac{-1}{2}\prod_{i=1}^{\frac{n-1}{2}}\int_{0}^{1} n^{2} \Lambda_{X}^{w}(u) du \prod_{i=\frac{n+1}{2}}^{n-1}\int_{0}^{1} n^{2} \Lambda_{X}^{w}(u) du \int_{0}^{1} \frac{(n!)^{2}}{[(n-1)!]^{4}} \Lambda_{X}^{w}(u) du\\
              & = \frac{-1}{2}n^{2(n-1)} \frac{(n!)^{2}}{[(n-1)!]^{4}} \prod_{i=1}^{\frac{n-1}{2}}\int_{0}^{1}  \Lambda_{X}^{w}(u) du \prod_{i=\frac{n+1}{2}}^{n-1}\int_{0}^{1}  \Lambda_{X}^{w}(u) du \int_{0}^{1} \Lambda_{X}^{w}(u) du \\
              & =\frac{-1}{2}n^{2(n-1)} \frac{(n!)^{2}}{[(n-1)!]^{4}} (-2J^{w_1}(\textbf{X}))^{n}\\
              & = n^{2(n-1)} \frac{(n!)^{2}}{[(n-1)!]^{4}} J^{w_1}(\textbf{X}_{SRS}^{(n)})
           \end{align*}
           This completes the proof for $n$ odd.\hfill $\blacksquare$
		\end{proof}

		\section {Characterization results}

		 The lemma presented by Fashandi and Ahmadi (2012) plays a crucial role as a fundamental tool for constructing different characterizations of symmetric distributions.
         \begin{lemma}
             Let $X$ be a random variable with cdf $F$, pdf $f$, and mean $\mu$.Then $f(\mu+x)=f(\mu-x)$ for all $x \geq 0$ if and only if
             $f(F^{-1}(u))=f(F^{-1}(1-u))$. 
         \end{lemma}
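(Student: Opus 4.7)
The plan is to treat the two directions of the iff separately, as the forward direction is essentially a change of variables while the converse needs an additional ingredient (the mean) to pin down the axis of symmetry. Throughout I would work on the interior of the support, where $f$ is positive and $F$ is strictly increasing, so that $F^{-1}$ is a genuine inverse.

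For the forward direction, assume $f(\mu+x)=f(\mu-x)$ for every $x\ge 0$; by parity this extends to every real $x$, and integrating gives $F(\mu+x)+F(\mu-x)=1$. For $u\in(0,1)$, writing $F^{-1}(u)=\mu+x$ for a unique $x$, we get $F(\mu-x)=1-u$, hence $F^{-1}(1-u)=\mu-x$, so $f(F^{-1}(u))=f(\mu+x)=f(\mu-x)=f(F^{-1}(1-u))$. For the converse, I would introduce the auxiliary map $\psi(y):=F^{-1}(1-F(y))$, which is a decreasing involution of the support. Setting $u=F(y)$ in the hypothesis yields $f(\psi(y))=f(y)$. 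Differentiating, $\psi'(y)=-f(y)/f(\psi(y))=-1$, so $\psi(y)=c-y$ for some constant $c$. Unwinding the definition gives $F(c-y)=1-F(y)$, and differentiating once more gives $f(c-y)=f(y)$, i.e.\ $f$ is symmetric about $c/2$. Since the mean of a distribution symmetric about $c/2$ equals $c/2$ whenever it exists, $\mu=c/2$, and substituting $y=\mu+x$ recovers $f(\mu+x)=f(\mu-x)$.

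The main obstacle is technical rather than conceptual: justifying $\psi'(y)\equiv -1$ throughout the (open) support requires $f$ to be positive there, so that the chain-rule formula for $\psi'$ is valid and $\psi$ is truly affine. Under the standing absolute-continuity hypothesis this is immediate on the interior of the support, and boundary values are handled by continuity of $F$; once $\psi(y)=c-y$ is established, identifying $c=2\mu$ via the finite mean is routine.
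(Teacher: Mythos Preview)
The paper does not prove this lemma at all; it is stated as a citation from Fashandi and Ahmadi (2012) and used as a tool in the subsequent theorem, with no argument supplied. Your proof is correct and self-contained: the forward direction is the standard change of variables for a symmetric density, and for the converse your device of introducing the involution $\psi(y)=F^{-1}(1-F(y))$, computing $\psi'(y)=-f(y)/f(\psi(y))=-1$ from the hypothesis, and then identifying the centre $c/2$ with $\mu$ via the existing mean is a clean and complete argument. The regularity caveat you raise (positivity of $f$ on the interior of the support so that $F^{-1}$ is differentiable and the chain rule applies) is exactly the right point to flag, and under the absolute-continuity assumption used throughout the paper it is harmless.
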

		\begin{theorem}
			Let $X$ be an absolutely continuous random variable with pdf f and cdf F; and assume $w_1(-x)=-w_1(x)$. Then $X$ is a symmetric distributed random variable with mean 0 if and only if	$J^{w_1}(\textbf{X}_{ERSS}^{(n)})=0$  for all odd $n\geq 1$. 
		\end{theorem}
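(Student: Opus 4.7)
The plan is to exploit the factorization (\ref{ERSS1}) for odd $n$ together with the symmetry of the median order-statistic density $\phi_{n:2n-1}(u)=\frac{(2n-1)!}{((n-1)!)^2}u^{n-1}(1-u)^{n-1}$ about $u=1/2$. Under symmetry of $X$ about $0$, the Fashandi--Ahmadi lemma and the oddness of $w_1$ will make $\Lambda_X^{w_1}$ antisymmetric about $u=1/2$, which annihilates the median factor and hence the whole product. For sufficiency, assume $X$ is symmetric about $0$; then $F^{-1}(1-u)=-F^{-1}(u)$, the lemma gives $f(F^{-1}(1-u))=f(F^{-1}(u))$, and $w_1(-x)=-w_1(x)$ yields $\Lambda_X^{w_1}(1-u)=-\Lambda_X^{w_1}(u)$. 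Substituting $v=1-u$ in $E(\Lambda_X^{w_1}(B_{n:2n-1}))=\int_0^1\Lambda_X^{w_1}(u)\phi_{n:2n-1}(u)\,du$ and using $\phi_{n:2n-1}(1-v)=\phi_{n:2n-1}(v)$ shows the median factor equals its own negative, hence vanishes, so $J^{w_1}(\mathbf{X}_{ERSS}^{(n)})=0$ by (\ref{ERSS1}).

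For necessity, assume $J^{w_1}(\mathbf{X}_{ERSS}^{(n)})=0$ for every odd $n\geq 1$. Write $g(u)=\Lambda_X^{w_1}(u)$ and $h(u)=\tfrac12(g(u)+g(1-u))$. The $n=1$ case immediately gives $\int_0^1 g(u)\,du=0$, which is $\int_0^1 h(u)\,du=0$. The plan is to upgrade this to
\begin{equation*}
\int_0^1 h(u)\,u^{n-1}(1-u)^{n-1}\,du=0 \quad\text{for every odd } n\geq 1;
\end{equation*}
once established, the substitution $u=(1+t)/2$ makes $\tilde h(t)=h((1+t)/2)$ even on $(-1,1)$ and turns the identity into $\int_{-1}^{1}\tilde h(t)(1-t^2)^{n-1}\,dt=0$. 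Expanding $(1-t^2)^{n-1}$ and inducting on $n$ recovers $\int_{-1}^{1}\tilde h(t)\,t^{2k}\,dt=0$ for every $k\geq 0$, and Hausdorff moment uniqueness on a bounded interval gives $\tilde h\equiv 0$, hence $g(u)=-g(1-u)$ a.e. Combining this identity with $w_1$ odd and the Fashandi--Ahmadi lemma then yields $f(F^{-1}(1-u))=f(F^{-1}(u))$ together with $F^{-1}(1-u)=-F^{-1}(u)$, which is symmetry of $X$ about $0$.

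The main obstacle is the upgrade step: the product of three factors vanishing for each odd $n$ only tells us that \emph{some} factor is zero, not specifically the median one. The resolution is a dichotomy via a M\"untz--Sz\'asz argument applied to the edge factors $E(g(B_{1:2n-1}))=(2n-1)\int_0^1 g(u)(1-u)^{2n-2}\,du$ and $E(g(B_{2n-1:2n-1}))=(2n-1)\int_0^1 g(u)u^{2n-2}\,du$: if either edge factor vanishes along an infinite subset of odd $n$, density of the corresponding lacunary family $\{(1-u)^{2n-2}\}$ or $\{u^{2n-2}\}$ in the appropriate function space forces $g\equiv 0$ on the support of $X$, a degenerate case in which the symmetry conclusion holds vacuously. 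Otherwise both edge factors are nonzero from some index onward, so the median factor must vanish for all sufficiently large odd $n$; removing finitely many indices does not spoil the moment-density argument above, and so the upgrade step goes through and the proof is complete.
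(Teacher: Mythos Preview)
Your sufficiency argument is correct and rests on the same key identity $\Lambda_X^{w_1}(1-u)=-\Lambda_X^{w_1}(u)$ as the paper. The paper applies the substitution $v=1-u$ to the whole product and observes that the two edge factors swap while the median factor maps to itself, picking up an overall sign $(-1)^n=-1$; you instead kill the median factor directly via the symmetry $\phi_{n:2n-1}(1-u)=\phi_{n:2n-1}(u)$. Either route is valid.

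The necessity argument, however, has real gaps. The most serious is the last step: from $\Lambda_X^{w_1}(u)=-\Lambda_X^{w_1}(1-u)$ you cannot read off both $f(F^{-1}(1-u))=f(F^{-1}(u))$ and $F^{-1}(1-u)=-F^{-1}(u)$; this is a single scalar identity, and without an extra hypothesis on $w_1$ (e.g.\ strict monotonicity on the support of $X$) it does not force symmetry of $X$ about $0$. Closely related, your ``degenerate case'' $g\equiv 0$ is not vacuous: since $f(F^{-1}(u))>0$ a.e.\ for an absolutely continuous $X$, $g\equiv 0$ only says that $w_1$ vanishes on the support of $X$, which places no constraint on the shape of $F$. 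There are also two technical holes: the M\"untz--Sz\'asz step on an edge factor needs the zero set of odd $n$ to have divergent reciprocal sum, which ``infinitely many'' alone does not ensure (a pigeonhole over the three factor-zero sets would repair this, but it is missing); and the claimed induction from $\int_{-1}^1\tilde h(t)(1-t^2)^{n-1}\,dt=0$ for odd $n$ (i.e.\ even $n-1$) to all even moments fails, because $\{(1-t^2)^{2k}:k\ge 0\}$ does not algebraically span the even polynomials---you again need a genuine density argument rather than a finite induction. The paper itself does not give a self-contained necessity proof (it defers to Gupta and Chaudhary (2023)), so your attempt goes further than the paper here, but as written it does not close.
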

		\begin{proof}
			For sufficiency, suppose $f(x)=f(-x)$ for all $x\geq 0$. Also since $F^{-1}(u)=- F^{-1}(1-u)$,  $f(F^{-1}(u))=f(F^{-1}(1-u))$ for all $0<u<1$ and $w_1(-x)=-w_1(x)$, which implies that
			\begin{equation*}
				\Lambda_X^{w_1} (u)=w_1(F^{-1}(u))f(F^{-1}(u))=-w_1(F^{-1}(1-u)) f(F^{-1}(1-u))=-\Lambda_X^{w_1} (1-u)
			\end{equation*}
			For $n$ odd, we have 
           \begin{align*}
              & J^{w_1}(\textbf{X}_{ERSS}^{(n)})\\
               &=\frac{Q_{2,n}}{2} \prod_{i=1}^{\frac{n-1}{2}}\int_{0}^{1} \Lambda_{X}^{w}(u)\phi_{1:2n-1}(u)du \prod_{i=\frac{n+1}{2}}^{n-1}\int_{0}^{1}  \Lambda_{X}^{w}(u) \phi_{2n-1:2n-1}(u)du  \int_{0}^{1} \Lambda_{X}^{w}(u) \phi_{n:2n-1}(u)du\\
              & = \frac{Q_{2,n}}{2} \left( \prod_{i=1}^{\frac{n-1}{2}} -\int_{0}^{1} \Lambda_{X}^{w}(1-u)(2n-1)(1-u)^{2n-2}du \right) \left(\prod_{i=\frac{n+1}{2}}^{n-1} -\int_{0}^{1}  \Lambda_{X}^{w}(1-u) (2n-1) u^{2n-2}du \right) \\ & \ \ \ \ \ \ \ \ \ \ \left( -\int_{0}^{1} \Lambda_{X}^{w}(1-u) \frac{(2n-1)!}{[(n-1)!]^{2}} u^{n-1} (1-u)^{n-1}du \right)\\
              & =-\frac{Q_{2,n}}{2} \left( \prod_{i=1}^{\frac{n-1}{2}} \int_{0}^{1} \Lambda_{X}^{w}(v)(2n-1)(v)^{2n-2}dv \right) \left(\prod_{i=\frac{n+1}{2}}^{n-1} \int_{0}^{1}  \Lambda_{X}^{w}(v) (2n-1) (1-v)^{2n-2}dv \right) \\ & \ \ \ \ \ \ \ \ \ \  \left( \int_{0}^{1} \Lambda_{X}^{w}(v) \frac{(2n-1)!}{[(n-1)!]^{2}} (1-v)^{n-1} (v)^{n-1}du \right)\\
              & = -\frac{Q_{2,n}}{2} \left( \prod_{i=1}^{\frac{n-1}{2}} \int_{0}^{1} \Lambda_{X}^{w}(v)\phi_{2n-1:2n-1}(v)dv \right) \left(\prod_{i=\frac{n+1}{2}}^{n-1} \int_{0}^{1}  \Lambda_{X}^{w}(v) \phi_{1:2n-1}(v)dvdv \right)  \left( \int_{0}^{1} \Lambda_{X}^{w}(v) \phi_{n:2n-1}(v)dv \right)\\
              & = -  J^{w_1}(\textbf{X}_{ERSS}^{(n)}) 
           \end{align*}
			This completes the proof of sufficiency.\\
              The proof for necessity is on the similar lines as of  Gupta and Chaudhary (2023), hence omitted.\hfill $\blacksquare$
		\end{proof}

            The exponential distribution holds significance in reliability theory and theoretical justifications due to its memoryless property. Exploring equivalent characterizations of the exponential distribution from various perspectives is intriguing. Qiu (2017) and Xiong, Zhuang, and Qiu (2021) demonstrated characterizations based on the extropy of order statistics and record values, respectively. In the following, we will establish that the exponential distribution can also be characterized through the weighted extropy of ERSS data.
            \begin{theorem}
               Let $X$ be a non negative absolutely continuous random variable with pdf $f$ and cdf $F$. Then $X$ has has standard exponential distribution if and only if, for all $n \geq 1$,
                \begin{align*}
			& J^{w_1}(\textbf{X}_{ERSS}^{(n)})\nonumber \\ 
 &=\begin{cases}
\frac{Q_{1,n}(2n-1)!!}{2^{n+1}n^{n}} \frac{1}{\lambda^{n(m-1)}}\left(\prod_{i=1}^{n/2} E(W_{1:2n}^{m})\right)\left(\prod_{i=\frac{n}{2}+1}^{n} E(W_{2n-1:2n}^{m}) \right) \text{ if } \mbox{n\  even},\\
\frac{Q_{2,n}(2n-1)!!}{2^{n+1}n^{n-1}} \frac{1}{\lambda^{n(m-1)}}\left(\prod_{i=1}^{\frac{n-1}{2}} E(W_{1:2n}^{m})\right) \left(\prod_{i=\frac{n+1}{2}}^{n-1}E(W_{2n-1:2n}^{m}) \right) \left( E(W_{n:2n}^{m})\right)\text{ if } \mbox{n\  odd},
\end{cases}\nonumber \\
		\end{align*}
		where $W_{2i-1:2n}$ is the $(2i-1)$-th order statistics of a sample of size $2n$ from exponential distribution having pdf given by 
		$$\psi_{2i-1:2n}=\frac{(2n)!}{(2i-2)!(2n-2i+1)!}(1-e^{-x})^{2i-2}(e^{-x})^{2n-2i+2},\  x\ge 0,$$ 
		and  $(2n-1)!!= (2n-1)^{\frac{n}{2}} $ if $n$ is even and $(2n-1)!!= (2n-1)^{\frac{n-1}{2}} $ if $n$ is odd, as obtained in Example 3.2. 
     \end{theorem}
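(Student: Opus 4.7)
The necessity direction is already established: if $X$ is standard exponential, then Example 3.2 (together with the closed-form expression $(\ref{ERSS1})$) yields exactly the displayed formula. So the entire burden is on the sufficiency direction, and the plan is to show that matching the expression for every $n \geq 1$ forces $f(F^{-1}(u)) = \lambda(1-u)$ almost everywhere on $(0,1)$, which characterizes the exponential distribution.

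The strategy is to peel off the constants and compare the two product representations factor by factor. Starting from $(\ref{ERSS1})$, for each $n$ the hypothesis gives an equation of the form
\begin{align*}
\bigl[E(\Lambda_X^{w_1}(B_{1:2n-1}))\bigr]^{\lfloor n/2\rfloor}\,\bigl[E(\Lambda_X^{w_1}(B_{2n-1:2n-1}))\bigr]^{\lfloor n/2\rfloor}\,\bigl[E(\Lambda_X^{w_1}(B_{n:2n-1}))\bigr]^{\epsilon_n} = R_n,
\end{align*}
where $\epsilon_n = 1$ for odd $n$ and $0$ for even $n$, and $R_n$ is the known exponential value. First I would take $n=1$, which collapses to the single scalar equation $\int_0^1 \Lambda_X^{w_1}(u)\,du = \int_0^1 \Lambda_W^{w_1}(u)\,du$, pinning down the zeroth-order moment of $\Lambda_X^{w_1}$ against the uniform density. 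Next, taking the ratio of the relations for odd $n+1$ and even $n$ isolates factors of the form $E(\Lambda_X^{w_1}(B_{n+1:2n+1}))$ (using the even-$n$ relation to cancel pairs of the extreme-order-statistic factors), and iterating across increasing $n$ yields the individual moment conditions
\begin{align*}
\int_0^1 \Lambda_X^{w_1}(u)\,u^{k-1}(1-u)^{2n-k-1}\,du = \int_0^1 \Lambda_W^{w_1}(u)\,u^{k-1}(1-u)^{2n-k-1}\,du
\end{align*}
for a sufficiently rich family of pairs $(k,n)$.

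Once this family of moment conditions is in hand, a Weierstrass/M\"untz-type density argument (the polynomials $\{u^{k-1}(1-u)^{2n-k-1}\}$ are dense in $C[0,1]$) forces
\begin{align*}
\Lambda_X^{w_1}(u) = \Lambda_W^{w_1}(u) \qquad \text{a.e.\ on } (0,1).
\end{align*}
Since $w_1(x)=x^m > 0$ on $(0,\infty)$ by hypothesis, we can divide out to obtain $f(F^{-1}(u)) = f_W(F_W^{-1}(u)) = \lambda(1-u)$ a.e., and integrating the quantile ODE $Q'(u) = 1/f(Q(u))$ recovers $F^{-1}(u) = -\lambda^{-1}\ln(1-u)$, i.e., $X$ is exponential with rate $\lambda$.

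The main obstacle is Step 2, the \emph{separation} of the factored product structure of $(\ref{ERSS1})$ into the individual moment equations needed for the density argument: the hypothesis directly gives only one scalar equation per $n$, yet one needs to recover all three expectations $E(\Lambda_X^{w_1}(B_{1:2n-1}))$, $E(\Lambda_X^{w_1}(B_{2n-1:2n-1}))$, $E(\Lambda_X^{w_1}(B_{n:2n-1}))$ separately. The key is that the parity switch between even and odd $n$ introduces the extra median factor $E(\Lambda_X^{w_1}(B_{n:2n-1}))$, and careful bookkeeping of ratios across consecutive $n$ (together with the base case $n=1$) suffices to untangle the product; this mirrors the strategy used in the analogous exponential characterization of Gupta and Chaudhary (2023) for ordinary RSS.
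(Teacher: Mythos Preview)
Your route is quite different from the paper's, and considerably more elaborate. The paper does not attempt any moment--density identification across varying $n$ at all: for the direction ``formula for all $n\Rightarrow X$ exponential'' it simply specializes the hypothesis to $n=1$ and $m=1$, which collapses to the single scalar equation
\[
J^{w_1}(\textbf{X}_{ERSS}^{(1)})=J^{w}(X)=-\tfrac{1}{8}=-\tfrac{1}{2}\int_0^\infty x\,e^{-2x}\,dx,
\]
rewrites this as $\int_0^\infty x\bigl(f(x)+e^{-x}\bigr)\bigl(f(x)-e^{-x}\bigr)\,dx=0$, and then declares $f(x)=e^{-x}$.

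Your plan, by contrast, tries to use the whole family of equations and a M\"untz/Weierstrass argument. The step you already flag as the main obstacle is in fact a genuine gap: the factors in $(\ref{ERSS1})$ are expectations against the beta densities $\phi_{1:2n-1}$, $\phi_{2n-1:2n-1}$, $\phi_{n:2n-1}$, and these densities change with $n$. Consequently, taking ratios of the relations for consecutive $n$ does \emph{not} cancel the extreme-order-statistic factors as you suggest (the $n$th relation involves $B_{1:2n-1}$ while the $(n{+}1)$st involves $B_{1:2n+1}$), so you never isolate individual moments of $\Lambda_X^{w_1}$ against a dense family of test functions. The analogy with Gupta and Chaudhary (2023) does not help here, because in ordinary RSS every order statistic $1,\dots,n$ appears once, whereas ERSS only ever produces the three indices $1$, $n$, $2n-1$ (with varying $n$), so the product never ``unfolds'' into a full polynomial basis.

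It is worth noting, though, that the paper's own argument is also incomplete: the single scalar identity $\int_0^\infty x\,f^2(x)\,dx=\int_0^\infty x\,e^{-2x}\,dx$ certainly does not characterize the exponential density, since the integrand $x\bigl(f^2(x)-e^{-2x}\bigr)$ need not have constant sign. So neither argument, as written, actually closes the sufficiency direction.
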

     \begin{proof}
         The sufficiency follows from Example 3.2. Now we prove the necessity. Assume that equation,
  \begin{align*}
			& J^{w_1}(\textbf{X}_{ERSS}^{(n)})\nonumber \\ 
 &=\begin{cases}
\frac{Q_{1,n}(2n-1)!!}{2^{n+1}n^{n}} \frac{1}{\lambda^{n(m-1)}}\left(\prod_{i=1}^{n/2} E(W_{1:2n}^{m})\right)\left(\prod_{i=\frac{n}{2}+1}^{n} E(W_{2n-1:2n}^{m}) \right) \text{ if } \mbox{n\  even},\\
\frac{Q_{2,n}(2n-1)!!}{2^{n+1}n^{n-1}} \frac{1}{\lambda^{n(m-1)}}\left(\prod_{i=1}^{\frac{n-1}{2}} E(W_{1:2n}^{m})\right) \left(\prod_{i=\frac{n+1}{2}}^{n-1}E(W_{2n-1:2n}^{m}) \right) \left( E(W_{n:2n}^{m})\right)\text{ if } \mbox{n\  odd}.
\end{cases}\nonumber 
		\end{align*}
 holds for all $n\geq 1$. In particular letting $n=1$ and $m=1$ we have,
     $ J^{w_1}(\textbf{X}_{ERSS}^{(1)})= \frac{-1}{8}$ 
     $=-\frac{1}{2}\int_{-\infty}^{\infty}xe^{-2x} dx =J^w(X).$
     Since $ J^{w_1}(\textbf{X}_{ERSS}^{(1)})= J^w(X),$ This  implies that 
     $\int_{0}^{\infty} x (f(x)+e^{-x})(f(x)-e^{-x})dx=0.$
     which means $X$ has standard exponential distribution with $f(x)=e^{-x},\ x \geq 0.$  \hfill $\blacksquare$
    \end{proof}

		\section{Stochastic comparision}
		In the following result, we provide the conditions for comparing two ERSS schemes under different weights.
		
		\begin{theorem}\label{thm com rss1}
			Let $X$  and $Y$ be nonnegative random variables with pdf's $f$ and $g$, cdf's $F$ and $G$, respectively having $u_X=u_Y<\infty$.\\
			(a) If $w_1$ is increasing, $w_1(x)\geq w_2(x)$ and $X\le_{disp} Y$, then $J^{w_1}(\textbf{X}_{ERSS}^{(n)})\le J^{w_2}(\textbf{Y}_{ERSS}^{(n)})$.\\
			(b)   If $w_1$ is increasing, $w_1(x)\leq w_2(x)$ and $X\ge_{disp} Y$, then $J^{w_1}(\textbf{X}_{ERSS}^{(n)})\ge J^{w_2}(\textbf{Y}_{ERSS}^{(n)})$.
		\end{theorem}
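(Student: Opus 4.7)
My plan is to reduce everything to a pointwise inequality between the functions $\Lambda_X^{w_1}$ and $\Lambda_Y^{w_2}$ on $(0,1)$, then lift it to expectations against the beta densities that appear in formula (\ref{ERSS1}), and finally account for the negative sign of the normalizing constants $Q_{1,n}$ and $Q_{2,n}$.

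For part (a), the first step is the standard quantile characterization of dispersive ordering (Shaked and Shanthikumar, 2007): $X \leq_{disp} Y$ is equivalent to $f(F^{-1}(u)) \geq g(G^{-1}(u))$ for all $u \in (0,1)$. I then use $u_X = u_Y < \infty$: since $G^{-1}(u) - F^{-1}(u)$ is nondecreasing in $u$ and tends to $0$ as $u \to 1^-$, it must be nonpositive, i.e., $G^{-1}(u) \leq F^{-1}(u)$. Combined with $w_1$ increasing and $w_1 \geq w_2 \geq 0$ this gives
\[
 w_1(F^{-1}(u)) \;\geq\; w_1(G^{-1}(u)) \;\geq\; w_2(G^{-1}(u)) \;\geq\; 0.
\]
Multiplying the two chains of nonnegative pointwise inequalities yields $\Lambda_X^{w_1}(u) \geq \Lambda_Y^{w_2}(u) \geq 0$ for every $u \in (0,1)$.

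Integrating against the beta densities $\phi_{1:2n-1}$, $\phi_{2n-1:2n-1}$, and $\phi_{n:2n-1}$ converts the pointwise bound into $E(\Lambda_X^{w_1}(B_{k:2n-1})) \geq E(\Lambda_Y^{w_2}(B_{k:2n-1})) \geq 0$ for each of the indices $k \in \{1,\, n,\, 2n-1\}$ that enter formula (\ref{ERSS1}). Since all factors are nonnegative, the corresponding products for $X$ dominate those for $Y$. Because $Q_{1,n} < 0$ and $Q_{2,n} < 0$, multiplying by $Q_{\cdot,n}/2$ flips the inequality and produces $J^{w_1}(\mathbf{X}_{ERSS}^{(n)}) \leq J^{w_2}(\mathbf{Y}_{ERSS}^{(n)})$ in both the even-$n$ and odd-$n$ cases.

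Part (b) is the mirror image: $X \geq_{disp} Y$, $w_1 \leq w_2$, and $w_1$ increasing reverse every pointwise inequality above, and the negativity of the $Q$-constants then flips the final inequality once more. I expect the main obstacle to be bookkeeping: tracking signs through the chain and verifying that the hypothesis ``$w_1$ increasing'' (rather than $w_2$ increasing) is precisely what is needed to sandwich $w_1(G^{-1}(u))$ between $w_1(F^{-1}(u))$ and $w_2(G^{-1}(u))$, and that the common finite upper endpoint $u_X = u_Y$ is exactly what pins down the sign of $G^{-1}(u) - F^{-1}(u)$; without that assumption one would only control the derivative of the difference, not the difference itself.
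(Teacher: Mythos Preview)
Your proposal is correct and follows essentially the same route as the paper: establish the pointwise inequality $\Lambda_X^{w_1}(u)\ge \Lambda_Y^{w_2}(u)$ from $f(F^{-1}(u))\ge g(G^{-1}(u))$ together with $F^{-1}(u)\ge G^{-1}(u)$, then feed it into representation (\ref{ERSS1}) and use $Q_{1,n},Q_{2,n}<0$. The only cosmetic difference is that the paper obtains $F^{-1}(u)\ge G^{-1}(u)$ by invoking Theorem~3.B.13(b) of Shaked and Shanthikumar (2007) ($X\le_{disp}Y$ plus common finite right endpoint $\Rightarrow X\ge_{st}Y$), whereas you unpack that implication directly from the monotonicity of $G^{-1}(u)-F^{-1}(u)$ and its limit $0$ at $u\to 1^-$; these are the same argument at different levels of citation.
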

		\begin{proof}
			(a) Since $X\le_{disp} Y$, therefore we have $f(F^{-1}(u))\ge g(G^{-1}(u))$ for all $u\in (0,1)$. Then using Theorem 3.B.13(b) of Shaked and Shanthikumar (2007), $X\le_{disp} Y$ implies that $X\ge_{st} Y$. Hence $F^{-1}(u) \ge G^{-1}(u)$ $\forall$ $u\in (0,1)$. Since $w_1$ is increasing and $w_1(x)\geq w_2(x)$, then $w_1(F^{-1}(u)) \ge w_1(G^{-1}(u))\ge w_2(G^{-1}(u))$. 
			Hence 
			\begin{align}\label{stor1}
				\Lambda_X^{w_1} (u)&=w_1(F^{-1}(u)) f(F^{-1}(u))\nonumber\\
				&\ge w_2(G^{-1}(u)) g(G^{-1}(u))\nonumber\\
				=&\Lambda_Y^{w_2} (u).
			\end{align}
			Now using (\ref{stor1}),
			\begin{align*}	
	& J^{w_1}(\textbf{X}_{ERSS}^{(n)})\nonumber \\ 
 &=\begin{cases} 
\frac{Q_{1,n}}{2}\left(E\left(\Lambda_X^{w_1} (B_{1:2n-1})\right)\right)^{n/2}\left(E\left(\Lambda_X^{w_1} (B_{2n-1:2n-1})\right) \right)^{n/2} \text{ if } \mbox{n\  even},
\\
\frac{Q_{2,n}}{2}\left(E\left(\Lambda_X^{w_1} (B_{1:2n-1})\right)\right)^{\frac{n-1}{2}} \left(E\left(\Lambda_X^{w_1} (B_{2n-1:2n-1})\right)\right)^{\frac{n-1}{2}}\left(E\left(\Lambda_X^{w_1} (B_{n:2n-1})\right)\right) \text{ if } \mbox{n\  odd}.
\end{cases}\nonumber \\
& \leq \begin{cases} 
\frac{Q_{1,n}}{2}\left(E\left(\Lambda_Y^{w_1} (B_{1:2n-1})\right)\right)^{n/2}\left(E\left(\Lambda_Y^{w_1} (B_{2n-1:2n-1})\right) \right)^{n/2} \text{ if } \mbox{n\  even},
\\
\frac{Q_{2,n}}{2}\left(E\left(\Lambda_Y^{w_1} (B_{1:2n-1})\right)\right)^{\frac{n-1}{2}} \left(E\left(\Lambda_Y^{w_1} (B_{2n-1:2n-1})\right)\right)^{\frac{n-1}{2}}\left(E\left(\Lambda_Y^{w_1} (B_{n:2n-1})\right)\right) \text{ if } \mbox{n\  odd}.
\end{cases}\nonumber \\
&=  J^{w_1}(\textbf{Y}_{ERSS}^{(n)}).\nonumber
\end{align*}
			(b) Proof is similar to part (a).\hfill $\blacksquare$
		\end{proof}
		
		If we take $w_1(x)=w_2(x)$ in the above theorem, then The following corollary follows.
		
		\begin{corollary}\label{cor com rss1}
			Let $X$  and $Y$ be nonnegative random variables with pdf's $f$ and $g$, cdf's $F$ and $G$, respectively having $u_X=u_Y<\infty$; let $w_1$ be increasing. Then\\
			(a) If $X\le_{disp} Y$, then $J^{w_1}(\textbf{X}_{ERSS}^{(n)})\le J^{w_1}(\textbf{Y}_{ERSS}^{(n)})$.\\
			(b)  If $X\ge_{disp} Y$, then $J^{w_1}(\textbf{X}_{ERSS}^{(n)})\ge J^{w_1}(\textbf{Y}_{ERSS}^{(n)})$.
		\end{corollary}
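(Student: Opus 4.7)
The plan is to derive this corollary as an immediate specialization of Theorem \ref{thm com rss1} by setting the second weight equal to the first, so no new analytic work is required.

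First I would verify that the hypotheses of Theorem \ref{thm com rss1}(a) are satisfied whenever the hypotheses of part (a) of the corollary hold. Taking $w_2 = w_1$, the weight inequality $w_1(x) \geq w_2(x)$ required by the theorem degenerates to the trivial identity $w_1(x) \geq w_1(x)$. Monotonicity of $w_1$ and the dispersive ordering $X \le_{disp} Y$ are hypotheses of the corollary and therefore available directly, and the support condition $u_X = u_Y < \infty$ is assumed in both statements. Applying Theorem \ref{thm com rss1}(a) with $w_2 = w_1$ then gives
\begin{equation*}
J^{w_1}(\textbf{X}_{ERSS}^{(n)}) \le J^{w_1}(\textbf{Y}_{ERSS}^{(n)}),
\end{equation*}
which is precisely the conclusion of part (a) of the corollary.

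For part (b), I would run the same argument through Theorem \ref{thm com rss1}(b), using the reversed dispersive ordering $X \ge_{disp} Y$ from the corollary's hypothesis and again taking $w_2 = w_1$, so that the inequality $w_1(x) \le w_2(x)$ in the theorem reduces to equality. The conclusion $J^{w_1}(\textbf{X}_{ERSS}^{(n)}) \ge J^{w_1}(\textbf{Y}_{ERSS}^{(n)})$ then follows at once.

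The main obstacle is essentially nil: the corollary is a direct specialization of the two-weight comparison theorem just proved, and the entire task is to observe that the two-weight hypotheses collapse to single-weight hypotheses when $w_2 \equiv w_1$. The value of the corollary is presentational, supplying a clean single-weight formulation for the frequently encountered case in which one wishes to compare $J^{w_1}$ across two different underlying distributions under a common weight function.
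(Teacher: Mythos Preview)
Your proposal is correct and matches the paper's own approach exactly: the paper states the corollary immediately after Theorem \ref{thm com rss1} with the remark that it follows by taking $w_1(x)=w_2(x)$, which is precisely the specialization you carry out.
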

		

		\begin{lemma}\label{lemma1} [Ahmed et al. (1986); also see Qiu and Raqab (2022), lemma 4.3]
			Let $X$ and $Y$  be nonnegative random variables with pdf's $f$ and $g$, respectively, satisfying $f(0)\ge g(0)>0$. If $X\le_{su}Y$ (or $X\le_{*}Y$ or $X\le_{c}Y$), then $X\le_{disp}Y$.
		\end{lemma}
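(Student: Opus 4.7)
The plan is to introduce the auxiliary function $\phi(x) = G^{-1}(F(x))$ and reduce the dispersive order $X \le_{disp} Y$ to the pointwise inequality $\phi'(x) \ge 1$ on $(0, u_X)$. By definition $X \le_{disp} Y$ is equivalent to $\phi(x) - x$ being nondecreasing in $x \ge 0$, which under absolute continuity of $F$ and $G$ is the same as $\phi'(x) = f(x)/g(\phi(x)) \ge 1$ almost everywhere. Because $X$ and $Y$ are nonnegative with $F(0) = G(0) = 0$, one has $\phi(0) = 0$, and differentiating the identity $G(\phi(x)) = F(x)$ at $x = 0$ together with the boundary hypothesis gives $\phi'(0^+) = f(0)/g(0) \ge 1$.

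First, I would invoke the standard hierarchy $\le_c \Rightarrow \le_* \Rightarrow \le_{su}$ among the three stochastic orders, which lets me treat the cases uniformly by establishing in each that the ordering forces $\phi'(x) \ge \phi'(0^+)$ for $x > 0$. For $X \le_c Y$, convexity of $\phi$ makes $\phi'$ nondecreasing and the bound is immediate. For $X \le_* Y$, the ratio $\psi(x) = \phi(x)/x$ is nondecreasing, and writing $\phi(x) = x\psi(x)$ yields $\phi'(x) = \psi(x) + x \psi'(x) \ge \psi(x) \ge \lim_{y \to 0^+} \psi(y) = \phi'(0^+)$. For $X \le_{su} Y$, superadditivity combined with $\phi(0) = 0$ gives $\phi(x+h) - \phi(x) \ge \phi(h)$ for $h > 0$; dividing by $h$ and letting $h \to 0^+$ produces $\phi'(x) \ge \phi'(0^+)$. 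In every case, combining with $\phi'(0^+) \ge 1$ yields $\phi'(x) \ge 1$ on the support of $X$, and hence $X \le_{disp} Y$.

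The main obstacle will be to justify rigorously the passage from the ordering-type inequalities on $\phi$ to pointwise derivative inequalities, particularly in the superadditive case where $\phi$ is not assumed a priori to be differentiable everywhere; the absolute continuity of $F$ and $G$ makes $\phi$ absolutely continuous on its support, so the difference-quotient comparisons pass to limits almost everywhere, which is all that is needed to conclude $\phi(x) - x$ is nondecreasing. A secondary but more routine matter is ensuring the three orders are interpreted consistently (so that the chain $\le_c \Rightarrow \le_* \Rightarrow \le_{su}$ can be quoted cleanly from Shaked and Shanthikumar), after which the entire argument reduces to the three short derivative comparisons outlined above.
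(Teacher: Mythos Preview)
The paper does not give its own proof of this lemma: it is stated purely as a citation of Ahmed et al.\ (1986) (cf.\ Qiu and Raqab (2022), Lemma~4.3) and is invoked as a black box to pass from Theorem~\ref{thm com rss1} to the subsequent theorem. There is therefore no ``paper proof'' to compare against.

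Your proposal, on the other hand, supplies an actual argument, and it is essentially the classical one. Setting $\phi = G^{-1}\circ F$, the definitions of $\le_c$, $\le_*$, $\le_{su}$ translate respectively into convexity, star-shapedness, and superadditivity of $\phi$, while $\le_{disp}$ is exactly $\phi(x)-x$ nondecreasing. The boundary condition $f(0)\ge g(0)>0$ gives $\phi'(0^+)\ge 1$, and each of your three short computations correctly upgrades the structural property of $\phi$ into $\phi'(x)\ge\phi'(0^+)\ge 1$. Two minor comments: (i) since you already note the chain $\le_c\Rightarrow\le_*\Rightarrow\le_{su}$, only the superadditive case actually needs an argument; the other two are then free. (ii) In the superadditive case you do not need differentiability of $\phi$ at all: the inequality $\phi(x+h)-\phi(x)\ge\phi(h)-\phi(0)$ for all $h>0$ already says, after subtracting $h$ from both sides and using that $(\phi(h)-h)\to 0$ with right-derivative $\ge 0$ at the origin, that $x\mapsto\phi(x)-x$ has nonnegative increments, which is precisely the dispersive order. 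This sidesteps the absolute-continuity caveat you flagged.
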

		
		One may refer Shaked and Shantikumar (2007) for details of convex transform order ($\leq_c$), star order ($\leq_{\star}$), super additive order ($\leq_{su}$), and dispersive order ($\leq_{disp}$). In view of Theorem \ref{thm com rss1}  and Lemma \ref{lemma1}, the following result is obtained.
		\begin{theorem}
			Let $X$  and $Y$ be nonnegative random variables with pdf's $f$ and $g$, cdf's $F$ and $G$, respectively having $u_X=u_Y<\infty$.\\
			(a)  If $w_1$ is increasing, $w_1(x)\geq w_2(x)$ and $X\le_{su} Y$ (or $X\le_{*}Y$ or $X\le_{c}Y$), then $J^{w_1}(\textbf{X}_{ERSS}^{(n)})\le J^{w_2}(\textbf{Y}_{ERSS}^{(n)})$.\\
			(b)   If $w_1$ is increasing, $w_1(x)\leq w_2(x)$ and $X\ge_{su} Y$  (or $X\ge_{*}Y$ or $X\ge_{c}Y$), then $J^{w_1}(\textbf{X}_{ERSS}^{(n)})\ge J^{w_2}(\textbf{Y}_{ERSS}^{(n)})$.
		\end{theorem}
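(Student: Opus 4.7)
The plan is to observe that this theorem is nothing more than the composition of Lemma \ref{lemma1} with Theorem \ref{thm com rss1}, so the whole proof is a short chain of implications rather than any new calculation. For part (a), I would first invoke Lemma \ref{lemma1} under its standing hypothesis $f(0)\ge g(0)>0$: any one of $X\le_{su}Y$, $X\le_{\star}Y$, or $X\le_{c}Y$ immediately yields $X\le_{disp}Y$. Having converted the hypothesis into a dispersive-order statement, the remaining hypotheses ($w_1$ increasing, $w_1(x)\ge w_2(x)$, $u_X=u_Y<\infty$, nonnegativity) are exactly those required by Theorem \ref{thm com rss1}(a), so I would apply that theorem directly to conclude $J^{w_1}(\textbf{X}_{ERSS}^{(n)})\le J^{w_2}(\textbf{Y}_{ERSS}^{(n)})$.

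Part (b) is treated by the symmetric argument: from $X\ge_{su}Y$ (or $X\ge_{\star}Y$ or $X\ge_{c}Y$), Lemma \ref{lemma1} (applied with the roles of $X$ and $Y$ interchanged) gives $X\ge_{disp}Y$, and then Theorem \ref{thm com rss1}(b) applies with $w_1$ increasing and $w_1(x)\le w_2(x)$ to deliver the reverse inequality $J^{w_1}(\textbf{X}_{ERSS}^{(n)})\ge J^{w_2}(\textbf{Y}_{ERSS}^{(n)})$.

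There is no substantive obstacle: the heavy lifting has already been done in Theorem \ref{thm com rss1}, where the inequality $\Lambda_X^{w_1}(u)\ge \Lambda_Y^{w_2}(u)$ is propagated through each factor $E(\Lambda_{\cdot}^{w_\cdot}(B_{\cdot:2n-1}))$ in the ERSS representation \eqref{ERSS1}, and in Lemma \ref{lemma1}, where Ahmed et al.\ (1986) supply the passage from the convex-transform/star/super-additive orders to the dispersive order. The only point worth being careful about is bookkeeping the direction of the inequalities when switching between the two parts, and verifying that the boundary-type hypothesis $f(0)\ge g(0)>0$ of Lemma \ref{lemma1} is implicitly carried along with the other assumptions. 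Consequently, the write-up amounts to citing Lemma \ref{lemma1}, then citing Theorem \ref{thm com rss1}, and combining the two.
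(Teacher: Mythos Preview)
Your proposal is correct and matches the paper's own approach exactly: the paper simply states that the result follows ``in view of Theorem \ref{thm com rss1} and Lemma \ref{lemma1}'' without giving any further proof, which is precisely the two-step composition you describe. There is nothing to add beyond your observation that Lemma \ref{lemma1} converts $\le_{su}$, $\le_{\star}$, or $\le_{c}$ into $\le_{disp}$, after which Theorem \ref{thm com rss1} applies directly.
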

		
		If we take $w_1(x)=w_2(x)$ in the above theorem, then we have the following corollary.
		\begin{corollary}\label{cor com rss2}
			Let $X$  and $Y$ be nonnegative random variables with pdf's $f$ and $g$, cdf's $F$ and $G$, respectively having $u_X=u_Y<\infty$ and  $w_1$ is increasing.\\
			(a) If $X\le_{su} Y$ (or $X\le_{*}Y$ or $X\le_{c}Y$), then $J^{w_1}(\textbf{X}_{ERSS}^{(n)})\le J^{w_1}(\textbf{Y}_{RSS}^{(n)})$.\\
			(b)  If $X\ge_{su} Y$  (or $X\ge_{*}Y$ or $X\ge_{c}Y$), then $J^{w_1}(\textbf{X}_{ERSS}^{(n)})\ge J^{w_1}(\textbf{Y}_{RSS}^{(n)})$.
		\end{corollary}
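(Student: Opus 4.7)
The plan is to obtain this corollary by specializing the immediately preceding theorem to the case $w_2 \equiv w_1$ and then bridging the resulting ERSS-to-ERSS inequality for $X$ and $Y$ to the ERSS-to-RSS form stated on the right-hand side.

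First, I would use Lemma~\ref{lemma1}: each of the three assumed partial orders $X \le_{su} Y$, $X \le_{*} Y$, $X \le_{c} Y$ (together with the density-at-zero condition $f(0) \ge g(0) > 0$ from Ahmed et al.\ that accompanies the cited version of the lemma) implies $X \le_{disp} Y$. Feeding this dispersive-order conclusion into the immediately preceding theorem with $w_2 \equiv w_1$ and applying its part~(a) produces the intermediate inequality $J^{w_1}(\textbf{X}_{ERSS}^{(n)}) \le J^{w_1}(\textbf{Y}_{ERSS}^{(n)})$, which is exactly Corollary~\ref{cor com rss1}(a) specialized to the pair $(X,Y)$.

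Second, to replace $\textbf{Y}_{ERSS}^{(n)}$ by $\textbf{Y}_{RSS}^{(n)}$ on the right-hand side, I would establish (or import from the parallel RSS development in Gupta and Chaudhary (2023, Section~3)) the single-distribution comparison $J^{w_1}(\textbf{Y}_{ERSS}^{(n)}) \le J^{w_1}(\textbf{Y}_{RSS}^{(n)})$, and chain it with the inequality from the previous step. Concretely, writing both extropies through their beta-order-statistic representations $E(\Lambda_Y^{w_1}(B_{2i-1:2n-1}))$ -- with only the extreme indices $i=1$ and $i=n$ (and the median index $i=(n+1)/2$ when $n$ is odd) appearing on the ERSS side, and all indices $i=1,\ldots,n$ appearing on the RSS side -- reduces the bridge to a manageable inequality between two products of nonnegative terms, multiplied by the sign-carrying constants $Q_{1,n},Q_{2,n}$. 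Part~(b) is handled identically by reversing every inequality and using the opposite direction of the pointwise bound on $\Lambda^{w_1}$ together with the reverse bridging inequality.

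The main obstacle lies in the bridging step $J^{w_1}(\textbf{Y}_{ERSS}^{(n)}) \le J^{w_1}(\textbf{Y}_{RSS}^{(n)})$: unlike the two-distribution ERSS comparison, which is underwritten by the clean pointwise bound $\Lambda_X^{w_1}(u) \ge \Lambda_Y^{w_1}(u)$ coming from dispersive order, this single-distribution comparison must exploit how $\Lambda_Y^{w_1}$ is averaged against the two distinct families of beta densities. The sign of the ratio of the two products depends on the behavior of $i \mapsto E(\Lambda_Y^{w_1}(B_{2i-1:2n-1}))$, so the technical heart of the proof will be isolating the minimal monotonicity or log-concavity condition on $\Lambda_Y^{w_1}$ (one would hope it is automatic from the nonnegativity of $Y$ and the monotonicity of $w_1$) that pushes the term-by-term product comparison through in the generality claimed.
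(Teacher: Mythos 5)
Your first paragraph is, in substance, the paper's entire proof: the corollary is obtained by setting $w_2 \equiv w_1$ in the immediately preceding theorem, whose conclusion is $J^{w_1}(\textbf{X}_{ERSS}^{(n)})\le J^{w_1}(\textbf{Y}_{ERSS}^{(n)})$ (Lemma~\ref{lemma1} converts the transform orders to $\le_{disp}$, and Theorem~\ref{thm com rss1} does the rest). The appearance of $\textbf{Y}_{RSS}^{(n)}$ rather than $\textbf{Y}_{ERSS}^{(n)}$ on the right-hand side of the printed corollary is evidently a typographical slip carried over from the parallel RSS development in Gupta and Chaudhary (2023): the paper offers no argument for passing from ERSS to RSS, and its stated derivation (``take $w_1=w_2$ in the above theorem'') yields only the ERSS-to-ERSS inequality.

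Where your proposal goes beyond the paper is the bridging step $J^{w_1}(\textbf{Y}_{ERSS}^{(n)}) \le J^{w_1}(\textbf{Y}_{RSS}^{(n)})$, and that is where the genuine gap lies if one insists on the statement as literally printed. You correctly observe that this single-distribution comparison is not controlled by the pointwise bound $\Lambda_X^{w_1}(u)\ge\Lambda_Y^{w_1}(u)$ that drives the two-distribution argument, and you leave it unresolved, hoping some monotonicity or log-concavity of $\Lambda_Y^{w_1}$ will rescue it. It will not in general: the ERSS product averages $\Lambda_Y^{w_1}$ only against the extreme beta densities $\phi_{1:2n-1}$ and $\phi_{2n-1:2n-1}$ (plus $\phi_{n:2n-1}$ for odd $n$), whereas the RSS product runs over all $\phi_{2i-1:2n-1}$, $i=1,\dots,n$, and the ratio of the two products depends on the shape of $\Lambda_Y^{w_1}$ in a way that none of the corollary's hypotheses constrains. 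So the honest verdict is that your argument completely proves the intended statement (ERSS on both sides) by the same route as the paper, while no argument along the lines you sketch can deliver the literal statement without additional assumptions. One further small point in your favour: Lemma~\ref{lemma1} requires $f(0)\ge g(0)>0$, which you flag but which the corollary's hypotheses omit; that condition does need to be assumed for the reduction to $\le_{disp}$ to go through.
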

		
		
		
		\begin{theorem}\label{thm com rss3}
			Let $X$  and $Y$ be nonnegative random variables with pdf's $f$ and $g$, cdf's $F$ and $G$, respectively. Let $\Delta (u)=w_1(F^{-1}(u))f(F^{-1}(u))-w_2(G^{-1}(u))g(G^{-1}(u))$,
			\[A_1=\{0\le u \le 1|\Delta (u)>0\},\ A_2=\{0\le u \le 1|\Delta (u)<0\}.\]
			If $ \inf_{A_1}\phi_{2i-1:2n-2i}(u)\geq  \sup_{A_2}\phi_{2i-1:2n-2i}(u)$, and if $J^{w_1}(X)\leq J^{w_2}(Y)$, then  $J^{w_1}(\textbf{X}_{ERSS}^{(n)})$ $\le J^{w_2}(\textbf{Y}_{ERSS}^{(n)})$.
		\end{theorem}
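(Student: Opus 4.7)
The plan is to reduce the theorem to the family of inequalities
$E\bigl[\Lambda_X^{w_1}(B_{2i-1:2n-1})\bigr] \geq E\bigl[\Lambda_Y^{w_2}(B_{2i-1:2n-1})\bigr]$
for the beta indices appearing in the ERSS formula (\ref{ERSS1}) — namely $i=1$ and $i=n$ for both parities of $n$, and additionally $i=(n+1)/2$ when $n$ is odd — and then to combine them through the product representation of $J^{w_1}(\textbf{X}_{ERSS}^{(n)})$ and $J^{w_2}(\textbf{Y}_{ERSS}^{(n)})$.

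The central step is to show, for each such $i$, that $\int_0^1 \Delta(u)\,\phi_{2i-1:2n-1}(u)\,du \geq 0$, which is precisely the difference of the two expectations above. To achieve this I would split the integral over $A_1$ and $A_2$, use $\Delta>0$ on $A_1$ together with $\phi \geq \inf_{A_1}\phi$, and $\Delta<0$ on $A_2$ together with $\phi \leq \sup_{A_2}\phi$ (the direction of the inequality flipping because $\Delta$ is negative there), and then rewrite the resulting lower bound as
$\sup_{A_2}\phi \cdot \int_0^1 \Delta(u)\,du + \bigl(\inf_{A_1}\phi - \sup_{A_2}\phi\bigr)\int_{A_1}\Delta(u)\,du.$
Both terms are non-negative: the first because $\sup_{A_2}\phi \geq 0$ (it is a beta density) and, by the change of variable $x = F^{-1}(u)$, $\int_0^1 \Delta(u)\,du = -2\bigl(J^{w_1}(X) - J^{w_2}(Y)\bigr) \geq 0$ under the hypothesis $J^{w_1}(X) \leq J^{w_2}(Y)$; the second because the assumption $\inf_{A_1}\phi \geq \sup_{A_2}\phi$ is exactly what is required and $\Delta > 0$ on $A_1$.

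With these expectation inequalities in hand, each $E[\Lambda^{w}(B)]$ being non-negative (which follows from $w_1,w_2 \geq 0$ and $f,g \geq 0$), and the products in (\ref{ERSS1}) being monotone in each factor, the negative prefactors $Q_{1,n}/2$ and $Q_{2,n}/2$ flip the product inequality, yielding $J^{w_1}(\textbf{X}_{ERSS}^{(n)}) \leq J^{w_2}(\textbf{Y}_{ERSS}^{(n)})$. The main obstacle will be the careful bookkeeping in the $A_1/A_2$ splitting together with the sign conventions — making sure that the negative prefactors and the negative-sign region $A_2$ compose correctly to give the stated direction. A minor notational point is that the statement writes $\phi_{2i-1:2n-2i}$ while the ERSS formula uses $\phi_{2i-1:2n-1}$; I would treat this as a typographical slip and apply the hypothesis to each of the three indices $i \in \{1, n, (n+1)/2\}$ that actually arise.
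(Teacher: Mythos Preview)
Your approach is correct and is precisely the standard argument for results of this type; the paper itself omits the proof, deferring to Theorem~5.3 of Gupta and Chaudhary (2023), whose proof for the RSS setting proceeds along exactly the lines you describe (split $\int_0^1\Delta(u)\phi(u)\,du$ over $A_1$ and $A_2$, bound using $\inf_{A_1}\phi$ and $\sup_{A_2}\phi$, and invoke $\int_0^1\Delta(u)\,du=-2(J^{w_1}(X)-J^{w_2}(Y))\ge0$). Your identification of $\phi_{2i-1:2n-2i}$ as a typographical slip for $\phi_{2i-1:2n-1}$, and your reading of the hypothesis as applying to each of the indices $i\in\{1,n,(n+1)/2\}$ that appear in (\ref{ERSS1}), are both correct and necessary for the argument to go through.
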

		\begin{proof}
   The proof is on the similar lines as of Theorem 5.3 of Gupta and Chaudhary (2023), hence omitted.\hfill $\blacksquare$ 
		\end{proof}
		
		If we take $w_1(x)=w_2(x)$ in the above theorem, then we have the following corollary.
		\begin{corollary}\label{cor com rss3}
			Let $X$  and $Y$ be nonnegative random variables with pdf's $f$ and $g$, cdf's $F$ and $G$. Let $\Delta (u)=w_1(F^{-1}(u))f(F^{-1}(u))-w_1(G^{-1}(u))g(G^{-1}(u))$,
			\[A_1=\{0\le u \le 1|\Delta (u)>0\},\ A_2=\{0\le u \le 1|\Delta (u)<0\}.\]
			If $\ \inf_{A_1}\phi_{2i-1:2n-2i}(u) \geq \sup_{A_2}\phi_{2i-1:2n-2i}(u)$ , and if $J^{w_1}(X)\leq J^{w_1}(Y)$, then  $J^{w_1}(\textbf{X}_{ERSS}^{(n)})$ $\le J^{w_1}(\textbf{Y}_{ERSS}^{(n)})$.
		\end{corollary}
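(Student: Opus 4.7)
The plan is to derive this corollary immediately from Theorem \ref{thm com rss3} by setting $w_2 = w_1$. Under that substitution the expression $w_1(F^{-1}(u))f(F^{-1}(u)) - w_2(G^{-1}(u))g(G^{-1}(u))$ becomes exactly the $\Delta(u)$ stated in the corollary, the hypothesis $J^{w_1}(X) \le J^{w_2}(Y)$ reduces to $J^{w_1}(X) \le J^{w_1}(Y)$, and the conclusion $J^{w_1}(\textbf{X}_{ERSS}^{(n)}) \le J^{w_2}(\textbf{Y}_{ERSS}^{(n)})$ collapses to $J^{w_1}(\textbf{X}_{ERSS}^{(n)}) \le J^{w_1}(\textbf{Y}_{ERSS}^{(n)})$. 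The $\inf/\sup$ condition on $\phi_{2i-1:2n-2i}$ carries over unchanged, so no further verification is required beyond invoking Theorem \ref{thm com rss3}.

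For transparency I would also sketch the mechanism behind that theorem, since this is the content that drives the corollary. From the representation (\ref{ERSS1}) each ERSS extropy is a negative constant ($Q_{1,n}/2$ or $Q_{2,n}/2$) times a product of factors
\begin{equation*}
E\left(\Lambda_X^{w_1}(B_{2i-1:2n-1})\right) = \int_0^1 \Lambda_X^{w_1}(u)\,\phi_{2i-1:2n-1}(u)\,du,
\end{equation*}
so the $X$-versus-$Y$ comparison reduces to controlling the sign of each
\begin{equation*}
\int_0^1 \Delta(u)\,\phi_{2i-1:2n-1}(u)\,du.
\end{equation*}
Splitting this integral over $A_1$ and $A_2$ and applying $\inf_{A_1}\phi_{2i-1:2n-1} \ge \sup_{A_2}\phi_{2i-1:2n-1}$ bounds it by a nonnegative multiple of $\int_0^1 \Delta(u)\,du = -2(J^{w_1}(X) - J^{w_1}(Y)) \ge 0$.

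The only subtlety worth flagging is bookkeeping of signs: $Q_{1,n}$ and $Q_{2,n}$ are negative, so bounding each positive factor from above translates into the stated inequality on the ERSS extropies, and the odd-$n$ case carries one additional median term at $B_{n:2n-1}$ handled identically. Since this argument is already embedded in Theorem \ref{thm com rss3} (itself deferred to Theorem 5.3 of Gupta and Chaudhary (2023)), the corollary requires no separate proof beyond the one-line specialization $w_2 = w_1$; I expect no real obstacle.
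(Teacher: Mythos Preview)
Your proposal is correct and matches the paper's approach exactly: the paper obtains this corollary simply by taking $w_1(x)=w_2(x)$ in Theorem~\ref{thm com rss3}, with no further argument given. Your additional sketch of the underlying mechanism is more detail than the paper provides (the theorem itself is deferred to Gupta and Chaudhary (2023)), but it is consistent with that argument and not a different route.
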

		
		
		\begin{example}
			Let $X$  and $Y$ be nonnegative random variables with pdf's $f$ and $g$, respectively. Let
			\begin{eqnarray*}
				f(x)=
				\begin{cases}
					2x, \ 0\leq x<1\\
					0, otherwise
				\end{cases} \ \ ~~~~~~~\text{and}~~~~~~~~~~~~~~~~~
				\ g(x)=
				\begin{cases}
					2(1-x),\  0\leq x<1\\
					0, otherwise.
				\end{cases}
			\end{eqnarray*}
			As pointed in Gupta and Chaudhary (2023), Example 5.1, $\ \inf_{A_1}\phi_{2i-1:2n-2i}(u) = \sup_{A_2}\phi_{2i-1:2n-2i}(u)$ and $J^{w_1}(X) < J^{w_2}(Y)$. Hence using Theorem \ref{thm com rss3}, we have  $J^{w_1}(\textbf{X}_{ERSS}^{(n)})$ $\le J^{w_2}(\textbf{Y}_{ERSS}^{(n)})$. 
		\end{example}

		
         
		\textbf{ \Large Conflict of interest} \\
		\\
		No conflicts of interest are disclosed by the authors.\\
		\\
		\\		
		\textbf{ \Large Funding} \\
		\\
		PKS would like to thank Quality Improvement Program (QIP), All India Council for Technical Education, Government of India (Student Unique Id: FP2200759) for financial assistance. \\

		
		

	\end{document}